\newtheorem{theorem}{Theorem}[section]
\newtheorem{lemma}[theorem]{Lemma}
\newtheorem{corollary}[theorem]{Corollary}
\theoremstyle{definition}
\newtheorem{assumption}{Assumption}[section]
\newtheorem{remark}[theorem]{Remark}
\newcommand{\Ito}{It\={o}{}}
\newcommand{\exclude}[1]{}
\newcommand{\densityconst}{{c_{\kappa}^{(p)}}}
\newcommand{\1}{\mathbf{1}}
\newcommand{\E}{{\mathbb{E}}}
\newcommand{\N}{{\mathbb{N}}}
\renewcommand{\P}{{\mathbb{P}}}
\newcommand{\Q}{{\mathbb{Q}}}
\newcommand{\R}{{\mathbb{R}}}
\newcommand{\sign}{\operatorname{sign}}
\definecolor{darkgreen}{rgb}{0,0.5,0}
\definecolor{lightgreen}{rgb}{0.5,0.9,0.5}
\definecolor{magenta}{rgb}{0.75,0,0.25}
\definecolor{violet}{rgb}{0.25,0,0.75}
\newcommand{\es}{{\underline{s}}}
\newcommand{\et}{{\underline{t}}}
\renewcommand{\P}{{\mathbb P}}
\newcommand{\cF}{{\cal F}}
\newcommand{\be}{\begin{equation}}
\newcommand{\Ee}{\end{equation}}
\newcommand{\bea}{\begin{eqnarray}}
\newcommand{\Ea}{\end{eqnarray}}
\newcommand{\beast}{\begin{eqnarray*}}
\newcommand{\East}{\end{eqnarray*}}
\newcommand{\bproof}{\begin{proof}}
\newcommand{\eproof}{\end{proof}}
\title{The Euler-Maruyama Scheme for SDEs with Irregular Drift: Convergence Rates via Reduction to a Quadrature Problem.}
\author{Andreas Neuenkirch\footnote{Institut f\"ur Mathematik, Universit\"at Mannheim, B6, 26, D-68131 Mannheim, Germany,
{\tt neuenkirch@math.uni-mannheim.de }}
\and
Michaela Sz\"olgyenyi\footnote{Department of Statistics, University of Klagenfurt, Universit\"atsstra\ss{}e 65--67,
9020 Klagenfurt, Austria
{\tt 	michaela.szoelgyenyi@aau.at}}}
\date{Preprint, January 2020}
\begin{document}

\maketitle
\newcommand{\slugmaster}{}

\begin{abstract} We study the strong convergence order of the Euler-Maruyama scheme for scalar  stochastic differential equations  with additive noise and irregular drift. We provide a general framework for the error analysis  by reducing it to  a weighted quadrature problem for irregular functions of Brownian motion.
Assuming Sobolev-Slobodeckij-type regularity of order $\kappa \in (0,1)$ for the non-smooth part of the drift, our analysis of
the quadrature problem yields the convergence order $\min\{3/4,(1+\kappa)/2\}-\epsilon$ for the equidistant Euler-Maruyama scheme (for arbitrarily small $\epsilon>0$). The cut-off of the convergence order at $3/4$  can be overcome by using a suitable non-equidistant discretization, which yields the strong convergence order of $(1+\kappa)/2-\epsilon$ for the corresponding Euler-Maruyama scheme.\\

\noindent \textbf{Keywords:} stochastic differential equations, Euler-Maruyama scheme, strong convergence, quadrature problem, non-equidistant discretization, Sobolev-Slobodeckij regularity\\
\textbf{MSC(2010):} 60H10, 60H35, 65C30
\end{abstract}

\pagestyle{myheadings}
\thispagestyle{plain}

\section{Introduction and Main Results}

Let $(\Omega, \cF, (\cF_t)_{t\in[0,T]}, \P)$ be a filtered probability space, where the filtration satisfies the usual conditions
and let  $W=(W_t)_{t \in [0,T]}$ be  a standard Brownian motion adapted to $(\cF_t)_{t\in[0,T]}$.
We consider  \Ito-stochastic differential equations (SDEs) of the form
\begin{align}\label{eq:sde}
X_t=\xi+\int_0^t \mu(X_s) ds +  W_t, \quad t \in[0,T],
\end{align}
where $T\in(0,\infty)$,   the drift coefficient $\mu\colon\R \rightarrow \R$ is  measurable and bounded, and the initial condition $\xi$ is independent of $W$.
Existence and uniqueness of a strong solution $X=(X_t)_{t\in[0,T]}$ to \eqref{eq:sde} is provided, e.g., in  \cite{zvonkin1974}.

For $n\in\N$ let $x^{(\pi_n)}=(x^{(\pi_n)}_t)_{t\in[0,T]}$ be the continuous-time Euler-Maruyama (EM) scheme
based on the discretization $$\pi_n=\{t_0, t_1, \ldots, t_n \} \qquad  \textrm{with} \qquad
 0=t_0 <t_1 < \ldots < t_n=T,$$ i.e.
\begin{align} x_t^{(\pi_n)}=\xi + \int_0^t \mu(x_{\es}^{(\pi_n)}) ds + W_t, \qquad t \in [0,T],  \label{euler} \end{align}
where
$ \et=\max \{t_{k}: \, t_{k} \leq t \}$. 
Our goal is to analyse the $L^2$-approximation error at the discretization points $t_k$, that is
\begin{align}  \max_{k\in\{0, \ldots, n\}}  \left( \E  \left[ \left|X_{t_k}-x_{t_k}^{(\pi_n)} \right|^2\right] \right)^{\!1/2}, \label{L2-error}
\end{align}
and in particular its dependence on $n$, i.e.~the scheme's convergence order. For this, we will study the time-continuous EM scheme and
\begin{align}  \sup_{t \in [0,T]}  \left( \E  \left[ \left|X_{t}-x_{t}^{(\pi_n)} \right|^2\right] \right)^{\!1/2}, 
\end{align}
which yields an upper bound for \eqref{L2-error}.

The error analysis of EM-type schemes for SDEs with discontinuous drift coefficient  has become -- after two pioneering articles  by \citet{gyongy1998} and  \citet{halidias2008}  --  a topic of growing interest in the recent years.

Articles which explicitly deal with the EM scheme for SDEs with irregular drift coefficients and additive noise  are
 \cite{halidias2008,lux,MENOUKEUPAMEN,gerencser2018}. Here, the best known results are from \citet{gerencser2018}: $L^2$-order $1/2-\epsilon$  for arbitrarily small $\epsilon>0$ is obtained for bounded and Dini-continuous drift coefficients for $d$-dimensional SDEs, while in the scalar case  one has $L^2$-order $1/2-\epsilon$ even for drift coefficients, which are only  bounded and integrable over $\mathbb{R}$.

For approximation results on SDEs with discontinuous drift coefficients and non-additive noise
see, e.g., \cite{sz15,ngo2016,sz2017a,ngo2017a,ngo2017b,sz2018a, sz2018b, muellergronbach2019}.
The best known results for EM schemes in this framework are
 $L^2$-order $1/2-\epsilon$  of an EM scheme with adaptive time-stepping for  multidimensional SDEs with piecewise Lipschitz drift and possibly degenerate diffusion coefficient, see \citet{sz2018b}, and 
$L^p$-order $1/2$ of the EM scheme for scalar SDEs with piecewise Lipschitz drift and possibly degenerate diffusion coefficient, see \citet{muellergronbach2019}.

Recently, also a transformation-based Milstein-type scheme has been analyzed for scalar SDEs by \citet{muellergronbach2019b}. They obtain  $L^p$-order $3/4$   for drift coefficients, which are piecewise  Lipschitz with piecewise Lipschitz derivative, and possibly degenerate diffusion coefficient.

Lower error bounds for the strong approximation of scalar SDEs with possibly discontinuous drift coefficients have been studied  in   \citet{hefter2018}. Assuming smoothness  of the coefficients only locally in a small neighbourhood of the initial value, the authors obtain for arbitrary methods that use a finite number of evaluations of the driving Brownian motion a  lower error bound of order one for the pointwise $L^1$-error. Lower  bounds will be also addressed in a forthcoming work by  \citet{muellergronbach2020}.

\smallskip

We will spell out a general framework for the analysis of the scheme \eqref{euler} for the SDE \eqref{eq:sde} under the following assumptions:
\begin{assumption}\label{ass} \quad  Assume that $\mu : \mathbb{R} \rightarrow \mathbb{R}$ with $\mu \neq 0$ can be decomposed into a regular and an irregular part $a, b\colon \mathbb{R} \rightarrow \mathbb{R}$, that is $\mu=a+b$, such that:
\begin{enumerate}
\item\label{ass-bd} (boundedness) \, $a, b: \mathbb{R} \rightarrow \mathbb{R}$ are bounded,
\item\label{ass-a}  (regular  part) \,\ $a \in C_b^2( \mathbb{R})$, i.e.~$a$  is twice continuously differentiable with bounded derivatives,
\item\label{ass-bl1}  (irregular part) \,$b \in L^1(\mathbb{R})$.
\end{enumerate} Moreover, we assume that
\begin{enumerate}
\item[(iv)] \label{ass-anfang} (initial value)  \, $\xi \in L^2(\Omega, \mathcal{F}_0, \P)$.
\end{enumerate}
\end{assumption}
\begin{assumption}\label{ass_b} \quad
 There exists  $\kappa \in  (0,1)$ such that 
$$  |b|_{\kappa}:= \left( \int_{\mathbb{R}} \int_{\mathbb{R}} \frac{|b(x)-b(y)|^2}{|x-y|^{2 \kappa  + 1}} \; dx \; dy \right)^{1/2}< \infty.$$
\end{assumption}
We call $|\cdot|_\kappa$ Sobolev-Slobodeckij semi-norm. Note that the decomposition of $\mu$ is only required for the error analysis and not for the actual implementation of the scheme.

\medskip

Assumption \ref{ass} is required for our perturbation analysis, where we use a suitable transformation of the state space and a Girsanov transform to show that  for all $\varepsilon\in(0,1)$ there exists a constant $C^{(R)}_{\varepsilon, a,b,T}>0$ such that
\begin{align} \label{intro-Pertub}   \sup_{t \in [0,T]} \E  \left[ \left|X_t-x_t^{(\pi_n)} \right|^2\right]  \leq  C^{(R)}_{\varepsilon, a,b,T} \cdot  \left( \|\pi_n\|^2 + \sup_{t \in [0,T]}|\mathcal{W}_t^{(\pi_n)}|^{1-\varepsilon} \right), \end{align}
where 
$$ \|\pi_n\|:=\max_{k= 0,\dots,n-1}|t_{k+1}-t_k| $$
and
\begin{align}
\mathcal{W}^{(\pi_n)}_{t} &= \E\!\left[ \left|  \int_{0}^{t}   \exp\left(-2\int_0^{W_s+\xi} b(z)dz \right)  \left[b(W_s+\xi)-  b(W_{\es}+\xi)\right] ds \right|^2\right], \quad t \in [0,T],
\end{align} 
see  Theorem \ref{main_1}.  The term  $\mathcal{W}^{(\pi_n)}_t$ corresponds to the  error of a quadrature problem, see Remark \ref{quad_rem}.

We would like to point out that 
\begin{itemize}
\item  this result provides a unifying general framework for the error analysis of the Euler-Maruyama scheme for SDEs with additive noise,
\item which can be used to analyse the convergence behaviour of the Euler-Maruyama scheme under very general assumptions on the drift coefficient  by various means for various discretizations.
\end{itemize}

We assume Sobolev-Slobodeckij regularity of order $\kappa \in (0,1)$ for $b$, i.e. Assumption \ref{ass_b}, and estimate  $\mathcal{W}^{(\pi_n)}$  for two different discretizations. For 
an equidistant discretization $\pi_n^{equi}$ given by
$$ t_k^{equi}= T \frac{k}{n}, \qquad  k=0, \ldots, n,$$
we obtain that $\mathcal{W}^{(\pi_n^{equi})}_t$ is of order $\min\{3/2,1+\kappa\}$ uniformly in $t \in [0,T]$
and consequently we have
\begin{align}  \label{eb-intro-1}
\sup_{t \in [0,T]} \left( \E\!  \left[ \left|X_t-x_t^{(\pi_n^{equi})} \right|^2\right] \right)^{\!1/2} \leq  C^{(EM),equi}_{\epsilon, \mu,T,\kappa} \cdot \left(   \frac{1}{n^{(1+\kappa)/2-\epsilon}} + \frac{1}{n^{3/4-\epsilon}} \right)
\end{align}
for $\epsilon >0$ arbitrarily small and a constant   $C^{(EM),equi}_{\epsilon, \mu,T,\kappa}>0$, independent of $n$, see Theorem \ref{main_2} and
Corollary \ref{cor_em}.
To overcome the cut-off of the convergence order for  $\kappa=1/2$, we use  a non-equidistant discretization $\pi_n^{*}$ given by
$$
 t_k^*=T  \left(\frac{k}{n}\right)^{\!2}, \qquad k=0, \ldots, n. $$
Similar non-equidistant nets have been used, e.g., in  \cite{lyons} to deal with weak error estimates for non-smooth functionals and in \cite{geiss} to deal with hedging errors in the presence of non-smooth pay-offs. We obtain that $\mathcal{W}^{(\pi_n^*)}_t$ is up to a log-term of order $1+\kappa$ uniformly in $t \in [0,T]$ and therefore we have
\begin{align}  \label{eb-intro-2} \sup_{t \in [0,T]} \left( \E\!  \left[ \left|X_t-x_t^{(\pi_n^{*})} \right|^2\right]\right)^{\!1/2}  \leq  C^{(EM),*}_{\epsilon, \mu,T,\kappa} \cdot  \frac{1}{n^{(1+\kappa)/2-\epsilon}} \end{align}
for $\epsilon >0$ arbitrarily small and a constant   $C^{(EM),*}_{\epsilon, \mu,T,\kappa}>0$, independent of $n$, see Theorem \ref{main_2} and
Corollary \ref{cor_em}.

\begin{remark}
\begin{enumerate}[(i)]
\item Our set-up covers a wide range of irregular perturbations. In particular, the use of  Sobolev-Slobodeckij regularity
allows to study irregular parts $b$ that are discontinuous. Examples include  indicator functions with compact support  or, more generally,  piecewise  H\"older continuous functions 
with compact support. In the former case one has Sobolev-Slobodeckij regularity of all orders $\kappa <1/2$, while for 
piecewise $\gamma$-H\"older continuous functions 
with compact support one has Sobolev-Slobodeckij regularity of all orders $\kappa < \min \{ 1/2, \gamma\}.$
Moreover functions, which are $\gamma$-H\"older continuous and have compact support, have  Sobolev-Slobodeckij regularity of all orders $\kappa < \gamma$.

Note that  Assumptions 1.1 and 1.2 imply that $b \in H^{\kappa}_{2}$, where $H^s_p$ with $s\in(0,\infty)$, $p\in[1,\infty)$ denotes the classical fractional Sobolev space, see, e.g., \cite{Sickel}.
Working in  $H^{s}_{p}$ or  in the Besov space $B^{s}_{p,q}$, where $q\in[1,\infty)$, could help to clarify the phenomenon why the same convergence order $3/4-\epsilon$ is 
obtained
for $\gamma$-H\"older continuous drift coefficients with $\gamma=1/2$ and for indicator functions as drift.

\item Our assumptions  cover also step functions as drift, i.e.
\begin{align} \label{step} \mu(x)=\sum_{\ell=1}^{L} \gamma_{\ell} \cdot \sign(x-x_i), \qquad x \in \mathbb{R}, \end{align}
with $L\in\N$, $\gamma_{1}, \ldots, \gamma_{L} \in \mathbb{R}$, and $-\infty <x_1 <x_2 < \ldots < x_L<\infty$.
This can be seen from the following: let $\mu(x)=\sign(x)$ and $\alpha\in(0,\infty)$. Then  the decomposition $a_\alpha,b_\alpha\colon \R\to\R$, $\mu(x)=a_\alpha(x)+b_\alpha(x)$, which  satisfies Assumption \ref{ass}  and Assumption \ref{ass_b} for all $\kappa <1/2$ and all $\alpha\in(0,\infty)$, can be chosen as
  \begin{align*}
   a_\alpha(x)=\begin{cases}
      1, & x\in(\alpha,\infty),\\
  \frac{2\int_\alpha^{\frac{x+3\alpha}{2}} (2\alpha-y)^2(y-\alpha)^2 dy}{\int_\alpha^{2\alpha} (2\alpha-y)^2(y-\alpha)^2 dy}-1, & x\in(-\alpha,\alpha),\\
    -1, & x\in(-\infty,-\alpha),
  \end{cases}
 \end{align*}
 and $b_\alpha(x)=\1_{(0,\alpha)}(x) \cdot (1-a_\alpha(x))+\1_{(-\alpha,0)}(x) \cdot (1-a_\alpha(x))$.
 Figure \ref{fig:signum} illustrates this decomposition.
 \begin{figure}[ht]
\begin{center}
\includegraphics[scale=0.4]{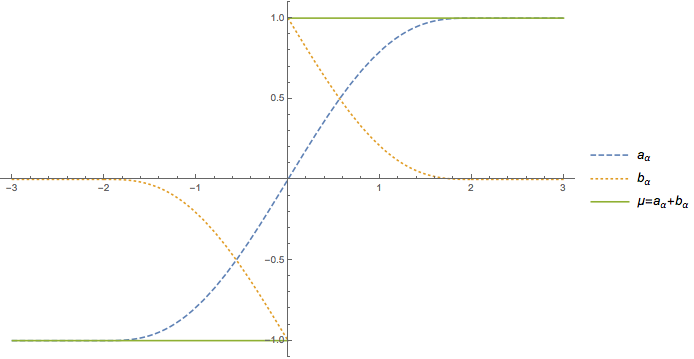}
\caption{A decomposition of the sign function ($\alpha=2$).}\label{fig:signum}
\end{center}
\end{figure}  Recall that such a decomposition of $\mu$ is only required for the error analysis and not for the actual implementation of the scheme.

\item In particular  for bounded $C_b^2(\mathbb{R})$-drift coefficients, which are perturbed by a step function  \eqref{step}, we obtain convergence order $3/4-\epsilon$ for all $\epsilon>0$, similar to the transformation-based Milstein-type method in \citet{muellergronbach2019b}. Moreover, for Lipschitz-continuous drift coefficients with bounded support we obtain convergence order $1-\epsilon$ for all $\epsilon>0$, similar to the drift-randomized Milstein-type scheme analyzed in \citet{raphael_2} under structurally different assumptions on the coefficient.

\item The reduction of the error of the EM scheme to a quadrature problem, i.e.~Theorem \ref{main_1}, relies among other results  on a Zvonkin-type transformation, see \cite{zvonkin1974}. 
For the analysis of numerical methods of SDEs with irregular coefficients this transformation has already been used, e.g., by \citet{ngo2017a}, and also the results of  \citet{MENOUKEUPAMEN,gerencser2018} rely on similar transformations.
In contrast to these works, we first split the drift-coefficient into a smooth and an irregular part, thus allowing a larger class of coefficients, and state with Theorem  \ref{main_1} a general reduction
result that explicitly links the error analysis of the EM scheme to the analysis of  quadrature problems.

\item Extensive numerical tests of the Euler scheme for different step functions as drift have been carried out  in \cite{lux}. In the absence of exact reference solutions, the estimates of the convergence rates via standard numerical tests turn out to be unstable and seem to depend  on the initial value and the fine structure of the step functions. For example, for $\xi=0$, $\mu=-\sign$  much better convergence rates are obtained than for $\xi=0$, $\mu=\sign$, although the Sobolev-Slobodeckij regularity remains unchanged.In particular, in some cases the estimated convergence orders are much worse than the guaranteed order $3/4-\epsilon$, which illustrates the unreliability of standard tests for such equations.

	\item In order to extend our result to the multidimensional case, we would need a multidimensional version of the Zvonkin-type transformation that we use here. A candidate for this would be a Veretennikov-type transformation, see  \cite{veretennikov1984}. However, this transformation is not given explicitly, but as solution to a PDE, and also other favourable properties are lost. Hence, the extension to the multidimensional case is out of the scope of the current paper as well as an extension to the Euler-Maruyama scheme for scalar SDEs with non-additive noise. While Zvonkin's transformation is still available, the Girsanov technique from Section \ref{sec:red_quad} is not applicable in this case due to the non-constant diffusion coefficient.

\end{enumerate}
\end{remark}

\begin{remark} \label{ytm}
Lamperti's transformation, i.e.
$$ \lambda\colon\R\to\R,  \qquad \lambda(x)= \int_{x_0}^x \frac{1}{\sigma(z)} dz,$$
with $x_0\in\R$,
reduces general scalar SDEs
$$ dX_t= \mu(X_t) dt + \sigma(X_t) dW_t,  \quad t \in [0,T], \qquad X_0=x_0,$$ with sufficiently smooth elliptic diffusion coefficient $\sigma\colon \mathbb{R} \rightarrow \mathbb{R}$ to SDEs of the form
$$ dY_t= g(Y_t) dt + dW_t, \quad t \in [0,T], \qquad
Y_0 =  \lambda(x_0),$$
with additive noise, where
$$  g(x)= \frac{\mu(\lambda^{-1}(x))}{\sigma(\lambda^{-1}(x))} - \frac{1}{2}\sigma'(\lambda^{-1}(x)), \qquad x \in \mathbb{R},  $$
and $X(t)= \lambda^{-1}(Y(t))$, $t \in [0,T]$.
If $\mu$ satisfies Assumptions \ref{ass} and \ref{ass_b} and   if $\sigma$ is three times continuously differentiable with bounded derivatives and
$$ 0< \inf_{x \in \mathbb{R}} \sigma(x) \leq  \sup_{x \in \mathbb{R}} \sigma(x) < \infty,$$
then $g$ satisfies Assumptions \ref{ass} and  \ref{ass_b}. So, if $\lambda$, $\lambda^{-1}$ and $g$ are explicitly known, then 
 $X_T$ can be approximated by $\lambda^{-1}(Y_T^{(\pi_n)})$ and the error bounds \eqref{eb-intro-1} and \eqref{eb-intro-2} carry over.
\end{remark}

\section{Reduction to a quadrature problem for irregular functions of Brownian motion}\label{sec:red_quad}

In this section we will relate the analysis of the pointwise $L^2$-error of the EM scheme to a quadrature problem which will be  simpler to analyse.

In the whole paper we will denote the expectation w.r.t.~$\P$ by $\E$, the expectation w.r.t.~any other measure $\Q$ by $\E_\Q$, and the Lipschitz constant of a Lipschitz continuous function $f$ by $L_f$.
For notational simplicity we will drop the superscript $(\pi_n)$,  wherever possible.

\subsection{Notation and preliminaries}

First, we introduce a transformation $\varphi$ of the state space, which allows us to deal with the irregular part $b$ of the drift coefficient of SDE \eqref{eq:sde}.
\begin{lemma}\label{smooth}  Let Assumption \ref{ass} hold. Let $\varphi: \mathbb{R} \rightarrow\mathbb{R}$ be defined by
\begin{align}\label{varphi-solves}
\varphi (x) = \int_0^x \exp\!\left(-2 \int_0^y b(z)\, dz\right) dy,  \qquad x \in \R.
\end{align}
Then
\begin{enumerate}[(i)]
\item\label{smooth-it1} the map $\varphi$ is differentiable with bounded derivative $\varphi'$, which is absolutely continuous with bounded Lebesgue density $\varphi''\colon\R\to\R$;
\item\label{smooth-itb} the map $\varphi$ is invertible with $\varphi^{-1} \in C_b^{1}(\mathbb{R})$;
\item\label{smooth-it2} the maps $\varphi' \circ \varphi^{-1}: \mathbb{R} \rightarrow\mathbb{R}$ and $ (\varphi' a)\circ \varphi^{-1}\colon  \mathbb{R} \rightarrow\mathbb{R}$ are globally Lipschitz.
\end{enumerate}
\end{lemma}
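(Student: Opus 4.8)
The plan is to reduce the whole statement to a few elementary properties of the function $B\colon\R\to\R$, $B(y):=\int_0^y b(z)\,dz$. The two hypotheses on $b$ in Assumption \ref{ass} translate into precisely what is needed: boundedness of $b$ makes $B$ Lipschitz with constant $\|b\|_\infty$, while $b\in L^1(\R)$ makes $B$ bounded with $\sup_{y\in\R}|B(y)|\le\|b\|_{L^1(\R)}$. Since $\varphi'(x)=\exp(-2B(x))$ by construction, the bound on $B$ yields at once
\[
 e^{-2\|b\|_{L^1(\R)}}\ \le\ \varphi'(x)\ \le\ e^{2\|b\|_{L^1(\R)}},\qquad x\in\R,
\]
so $\varphi$ is differentiable with a continuous derivative that is bounded and bounded away from $0$. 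For the absolute continuity claim in part (i) I would write $\varphi'=e^{-2B}$ as the composition of the $C^1$ function $t\mapsto e^{-2t}$ with the Lipschitz function $B$; the chain rule for the composition of a $C^1$ map with a Lipschitz map, together with $B'=b$ a.e.\ (fundamental theorem of calculus for the integrable function $b$), shows that $\varphi'$ is locally absolutely continuous with a.e.\ derivative $\varphi''=-2b\,\varphi'$, which is bounded by $2\|b\|_\infty e^{2\|b\|_{L^1(\R)}}$. In particular $\varphi'$ is globally Lipschitz, a fact I will reuse below.

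For part (ii) I would exploit the lower bound on $\varphi'$ twice. First, $\varphi'>0$ everywhere together with continuity of $\varphi'$ makes $\varphi$ a continuously differentiable, strictly increasing function; second, $\varphi(x)=\int_0^x\varphi'(y)\,dy$ with $\varphi'\ge e^{-2\|b\|_{L^1(\R)}}$ forces $|\varphi(x)|\to\infty$ as $|x|\to\infty$, so $\varphi$ is a bijection of $\R$. The inverse function theorem then gives $\varphi^{-1}\in C^1(\R)$ with $(\varphi^{-1})'=1/(\varphi'\circ\varphi^{-1})$, and the two-sided bound on $\varphi'$ turns this into $0<(\varphi^{-1})'(y)\le e^{2\|b\|_{L^1(\R)}}$; thus $\varphi^{-1}$ is continuously differentiable with bounded derivative (equivalently, globally Lipschitz with constant $e^{2\|b\|_{L^1(\R)}}$), which is the assertion of part (ii).

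Finally, part (iii) follows by combining Lipschitz properties. The map $\varphi'$ is Lipschitz by part (i) and $\varphi^{-1}$ is Lipschitz by part (ii), hence $\varphi'\circ\varphi^{-1}$ is Lipschitz as a composition of Lipschitz maps. For the second map, $a$ is bounded by Assumption \ref{ass} and Lipschitz because $a\in C_b^2(\R)$ has bounded first derivative; since $\varphi'$ is bounded and Lipschitz as well, the product $\varphi'a$ is bounded and Lipschitz, via the estimate $|\varphi'(x)a(x)-\varphi'(y)a(y)|\le\|\varphi'\|_\infty L_a|x-y|+\|a\|_\infty L_{\varphi'}|x-y|$; composing with the Lipschitz map $\varphi^{-1}$ shows that $(\varphi'a)\circ\varphi^{-1}$ is globally Lipschitz.

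I do not expect a real obstacle here, since everything is controlled by the two elementary properties of $B$. The only step requiring a little care is the absolute continuity assertion in part (i): because $b$ is merely bounded and integrable, $B$ is only Lipschitz and $B'=b$ holds merely almost everywhere, so one must appeal to the chain rule for absolutely continuous (equivalently, locally Lipschitz) functions rather than the classical $C^1$ chain rule. Once that is in place, reading off $\varphi''=-2b\varphi'$ and its bound is immediate.
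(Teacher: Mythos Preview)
Your proposal is correct and follows essentially the same approach as the paper's proof: both use $b\in L^1(\R)$ to obtain the two-sided bound $e^{-2\|b\|_{L^1}}\le\varphi'\le e^{2\|b\|_{L^1}}$, compute $\varphi''=-2b\varphi'$ via the Lebesgue fundamental theorem, derive invertibility and $(\varphi^{-1})'=1/(\varphi'\circ\varphi^{-1})$ from the lower bound on $\varphi'$, and conclude part~(iii) from the fact that compositions and products of bounded Lipschitz maps are Lipschitz. Your treatment is in fact slightly more explicit than the paper's (e.g., the surjectivity of $\varphi$ and the chain rule for absolutely continuous functions), but the underlying argument is identical.
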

\begin{proof}
First note that $b$ is bounded. So, by construction and the fundamental theorem of Lebesgue-integral calculus we have
$$ \varphi'(x)= \exp\!\left(-2 \int_0^x b(z)\, dz\right), \qquad  \varphi''(x) =-2b(x) \varphi'(x), \qquad x \in \mathbb{R}.$$
Since by assumption $b \in L^1(\R)$, we have that
\begin{align} \label{bound_phi'} \exp (- 2\|b\|_{L^1}) \leq \varphi'(x) \leq \exp (2\| b \|_{L^1}), \qquad x \in \R, \end{align}
which shows item \eqref{smooth-it1}. 
The last equation also implies that $\varphi$ is invertible. Moreover, we have
$$ (\varphi^{-1})^{'}(y)=\frac{1}{\varphi'(\varphi^{-1})(y))},$$ so  \eqref{bound_phi'} implies that $\varphi^{-1} \in C_b^{1}(\mathbb{R})$.
This proves item \eqref{smooth-itb}.
The Lipschitz property of $\varphi' \circ \varphi^{-1}$ and  $(\varphi'a) \circ \varphi^{-1}$ follows from the boundedness of $a, \varphi'$ and the Lipschitz property of $\varphi', \varphi^{-1}$, and $a$.
This proves item \eqref{smooth-it2}.
\end{proof}
The previous lemma implies in particular that $\varphi$ is twice differentiable almost everywhere and solves
\begin{align}
b(x)\varphi'(x) + \frac{1}{2}  \varphi''(x) = 0 \label{ode_phi} \quad  \textrm{for almost all} \,\,  \, \, x \in \R.
\end{align}

A similar transformation was introduced by Zvonkin in \cite{zvonkin1974} and the use of such techniques for the numerical analysis of SDEs goes back until \cite{talay}.\\

Now, define the transformed process $Y=(Y_t)_{t\in[0,T]}$ as $Y_t=\varphi(X_t)$. By \Ito's formula, $\mu=a+b$, and \eqref{ode_phi} we have
\begin{align*}
	Y_t
	= \varphi(\xi) + \int_0^t \varphi'(X_s)a(X_s) \,ds+ \int_0^t \varphi'(X_s) \,dW_s, \qquad t \in [0,T].
\end{align*}
Moreover, define the transformed EM scheme   $y=(y_t)_{t\in[0,T]}$ as $y_t=\varphi(x_t)$.   \Ito's formula,  \eqref{euler}, $\mu=a+b$, and \eqref{ode_phi} give
\begin{align*}
	y_t
	& = \varphi(\xi)
	+\int_0^t \left( \varphi'(x_s) (a+b)(x_{\es})+\frac{1}{2}\varphi''(x_s) \right) ds
	+ \int_0^t \varphi'(x_s)dW_s \\ & = \varphi(\xi) 
+\int_0^t \varphi'(x_s) \left(  (a+b)(x_{\es})  - (a+b)(x_s) \right) ds  
\\ & \qquad \quad \, 
	+ \int_0^t \varphi'(x_s) a(x_s) ds + \int_0^t \varphi'(x_s)dW_s,  \qquad t \in [0,T].
\end{align*}

Next, we will exploit  Girsanov's theorem, see, e.g., \cite[Section 3.5]{karatzas1991}. More precisely, we  will use a change of measure such that under the new measure $\mathbb{Q}$  the drift of the Euler scheme is removed. So let $L_T^{(\pi_n)}=\frac{d\Q}{d\P}$ be the corresponding Radon-Nikodym derivative for which  $x^{(\pi_n)}-\xi=(x^{(\pi_n)}_t-\xi)_{t \in [0,T]}$ is a Brownian motion under $\Q$, that is
\begin{equation}\label{rndensity}
L_T^{(\pi_n)}= \exp \left( -\int_0^T \mu(x_{\es}^{(\pi_n)}) dW_s - \frac{1}{2}\int_0^T\mu^2(x_{\es}^{(\pi_n)}) ds \right).
\end{equation}

We will require the following moment bound:

\begin{lemma} \label{girsanov} Let Assumption \ref{ass} hold.
For all $\varepsilon>0$ there exists a constant $c^{(L)}_{\mu,T,\varepsilon}>0$ such that
$$  \left( \E_\Q\!\left[\Big |L_T^{(\pi_n)} \Big|^{-\frac{1}{\varepsilon}}\right] \right)^{\varepsilon} \leq c^{(L)}_{\mu,T,\varepsilon}. $$
\end{lemma}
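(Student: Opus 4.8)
The statement asks for a uniform-in-$n$ bound on negative moments of the Radon–Nikodym density $L_T^{(\pi_n)}$ under the measure $\Q$. The key structural observation is that
\[
\big(L_T^{(\pi_n)}\big)^{-1} = \exp\!\left( \int_0^T \mu(x_{\es}^{(\pi_n)})\, dW_s + \frac{1}{2}\int_0^T \mu^2(x_{\es}^{(\pi_n)})\, ds \right),
\]
and that under $\Q$ the process $\widetilde W := x^{(\pi_n)} - \xi$ is a Brownian motion. I would first re-express $dW_s$ in terms of $d\widetilde W_s$: since $dx_s^{(\pi_n)} = \mu(x_{\es}^{(\pi_n)})\,ds + dW_s$ and $dx_s^{(\pi_n)} = d\widetilde W_s$, we get $dW_s = d\widetilde W_s - \mu(x_{\es}^{(\pi_n)})\,ds$. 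Substituting,
\[
\big(L_T^{(\pi_n)}\big)^{-1}
= \exp\!\left( \int_0^T \mu(x_{\es}^{(\pi_n)})\, d\widetilde W_s - \frac{1}{2}\int_0^T \mu^2(x_{\es}^{(\pi_n)})\, ds \right),
\]
so that, raising to the power $-1/\varepsilon$ (note the sign flips again), $\big(L_T^{(\pi_n)}\big)^{-1/\varepsilon}$ equals $\exp\!\big(-\tfrac1\varepsilon \int_0^T \mu(x_{\es})\,d\widetilde W_s - \tfrac1{2\varepsilon}\int_0^T \mu^2(x_{\es})\,ds\big)$, which I would write as the product of a stochastic exponential $\mathcal E_t := \exp\!\big(-\tfrac1\varepsilon\int_0^t \mu(x_{\es})\,d\widetilde W_s - \tfrac1{2\varepsilon^2}\int_0^t \mu^2(x_{\es})\,ds\big)$ — a genuine $\Q$-martingale with expectation $1$, since $\mu$ is bounded so Novikov's condition holds trivially — times the correction factor $\exp\!\big(\tfrac{1-\varepsilon}{2\varepsilon^2}\int_0^T \mu^2(x_{\es})\,ds\big)$.

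The second factor is deterministically bounded: since $\mu = a+b$ is bounded by Assumption~\ref{ass}\eqref{ass-bd}–\eqref{ass-a}, we have $\int_0^T \mu^2(x_{\es}^{(\pi_n)})\,ds \le T\|\mu\|_\infty^2$, so this factor is at most $\exp\!\big(\tfrac{1-\varepsilon}{2\varepsilon^2}T\|\mu\|_\infty^2\big)$, a constant depending only on $\mu$, $T$, $\varepsilon$. Taking $\E_\Q$ and using that $\E_\Q[\mathcal E_T] = 1$ by Novikov, we obtain
\[
\E_\Q\!\left[\big|L_T^{(\pi_n)}\big|^{-1/\varepsilon}\right] \le \exp\!\left(\tfrac{1-\varepsilon}{2\varepsilon^2}\,T\,\|\mu\|_\infty^2\right),
\]
and raising both sides to the power $\varepsilon$ gives the claim with $c^{(L)}_{\mu,T,\varepsilon} = \exp\!\big(\tfrac{1-\varepsilon}{2\varepsilon}T\|\mu\|_\infty^2\big)$, which is manifestly independent of $n$.

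The only real care needed is bookkeeping: getting the signs and the powers of $\varepsilon$ right when splitting $\exp(-\tfrac1\varepsilon\int\mu\,d\widetilde W - \tfrac1{2\varepsilon}\int\mu^2\,ds)$ into a true exponential martingale (which requires the $-\tfrac1{2\varepsilon^2}\int\mu^2\,ds$ compensator) plus the leftover deterministic-bounded factor. I would double-check that the stochastic integral $\int_0^\cdot \mu(x_{\es}^{(\pi_n)})\,d\widetilde W_s$ is well-defined under $\Q$ (it is: $x_{\es}^{(\pi_n)}$ is adapted and $\mu$ is bounded, so the integrand is bounded and progressively measurable), and that Novikov's criterion applies — again immediate from boundedness of $\mu$. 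There is no genuine obstacle here; the lemma is essentially a boundedness-of-$\mu$ computation, and the main point is simply that the bound does not see $n$ because only $\|\mu\|_\infty$ and $T$ enter.
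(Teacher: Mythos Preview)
Your argument is correct and in fact cleaner than the paper's. There is a harmless sign slip: from $L_T^{-1}=\exp\big(\int_0^T\mu(x_{\es})\,d\widetilde W_s-\tfrac12\int_0^T\mu^2(x_{\es})\,ds\big)$ one gets $L_T^{-1/\varepsilon}=\exp\big(+\tfrac1\varepsilon\int\mu\,d\widetilde W-\tfrac1{2\varepsilon}\int\mu^2\,ds\big)$, not with a minus sign in front of the stochastic integral. This does not affect the proof, since Novikov's criterion and the martingale property depend only on the quadratic variation, so the exponential martingale works with either sign and the leftover factor is unchanged.

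The paper takes a different route: it changes back to $\P$ via $\E_\Q[L_T^{-1/\varepsilon}]=\E[L_T^{(\varepsilon-1)/\varepsilon}]$, splits off the bounded Lebesgue part, and then controls $\E\big[\exp\big(\tfrac{1-\varepsilon}{\varepsilon}\int_0^T\mu(x_{\es})\,dW_s\big)\big]$ by invoking a Gaussian tail estimate for \Ito-integrals with bounded integrands and integrating the tail. Your approach stays under $\Q$, recognizes $L_T^{-1}$ as a genuine $\Q$-stochastic exponential, and uses Novikov directly; this avoids the tail integration entirely and yields an explicit constant in one line. The paper's argument is slightly more self-contained (it does not quote Novikov) but longer; yours exploits the structural fact that Girsanov densities have this exponential-martingale duality and is the more standard way to bound moments of Radon--Nikodym derivatives. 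Both routes rely only on $\|\mu\|_\infty<\infty$ and give bounds independent of $n$.
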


\begin{proof}
First, note that
\begin{align*}
&\E_\Q\!\left[|L_T^{(\pi_n)}|^{-\frac{1}{\varepsilon}}\right] =\E\!\left[ |L_T^{(\pi_n)}|^{\frac{\varepsilon-1}{\varepsilon}}\right] \\
& \quad =
 \E\left[\exp\!\left(\frac{\varepsilon-1}{\varepsilon}\left[
 -\int_0^T \mu(x_{\es}) dW_s - \frac{1}{2}\int_0^T\mu^2(x_{\es}) ds
 \right]\right)\right]
\\ & \quad =
  \E\left[\exp\!\left(\frac{1-\varepsilon}{\varepsilon}\left[
 \int_0^T \mu(x_{\es}) dW_s + \frac{1}{2}\int_0^T\mu^2(x_{\es}) ds
 \right]\right)\right]
\\ & \quad \le
   \exp\!\left(\frac{1-\varepsilon}{2\varepsilon}
  T \|\mu\|_\infty^2 
 \right)
  \E\left[
  \exp\!\left(\frac{1-\varepsilon}{\varepsilon}
 \int_0^T \mu(x_{\es}) dW_s\right)
 \right].
\end{align*} 
It\=o-integrals with bounded integrands have Gaussian tails, i.e. 
$$ \P \left(  \sup_{t \in [0,T]} \left| \int_0^t \mu(x_{\es}) dW_s \right| \geq \delta \right) \leq 2 \exp \left( - \frac{\delta^2}{ 4T\|\mu\|_{\infty}^2}\right ), \qquad \delta >0,$$
which is obtained by using \cite[(A.5) in Appendix A.2]{nualart2006} with $\rho=2 T \| \mu \|^2_{\infty}$.
 Since positive random variables $Z$ satisfy
\begin{align*}
 \E [Z]  = \int_0^{\infty} \P(Z \geq z) \,dz,
\end{align*}
 it follows that
\begin{align*}  
&
  \E\left[
  \exp\!\left(\frac{1-\varepsilon}{\varepsilon}
 \int_0^T \mu(x_{\es}) dW_s\right)
 \right] 
 \\ & \quad \le 
 \E\left[\exp\!\left(\left|\frac{1-\varepsilon}{\varepsilon}\right|\, \left|
 \int_0^T \mu(x_{\es}) dW_s\right| \right)
 \right] 
\\&\quad=
  \int_0^\infty \P\!\left( \exp\!\left(\left|\frac{1-\varepsilon}{\varepsilon}\right| \,\left|
 \int_0^T \mu(x_{\es}) dW_s\right|\right) \ge z \right) dz
\\&\quad \leq 1+
  \int_1^\infty \P\!\left( \left| 
 \int_0^T\mu(x_{\es}) dW_s\right| \ge \log(z)\left|\frac{\varepsilon}{1-\varepsilon}\right| \right) dz
 \\&\quad \le
1+ 2  \int_0^{\infty} \exp \left( - \frac{(\log(z))^2 \varepsilon^2}{(1-\varepsilon)^2  4T\|\mu\|_\infty^2} \right) d z 
\\ & \quad =
1+ 2  \int_{-\infty}^{\infty} \exp \left( \delta -  \frac{\delta^2}{2} \frac{\varepsilon^2}{(1-\varepsilon)^2 {2T\|\mu\|_\infty^2}} \right) d \delta
\\&\quad=
1+ \frac{4\sqrt{T\pi} (1-\varepsilon) \|\mu\|_\infty}{\varepsilon}\exp\!\left(\frac{(1-\varepsilon)^2 T \|\mu\|_\infty^2}{\varepsilon^2}\right)
< \infty,
\end{align*}
where the last step follows, e.g., from  the moment generating function for a centred Gaussian variable with variance $\frac{(1-\varepsilon)^2 2T \|\mu\|_\infty^2}{\varepsilon^2}$.
\end{proof}

Finally, we establish a technical, but straightforward estimate of weighted sums of iterated (\Ito)-integrals. 

\begin{lemma}\label{step-decomp}
Let $\psi_1,\psi_2\colon\R\to\R$ be bounded and measurable functions.
Then for all $t \in [0,T]$ we have
\begin{align*}
\E \!\left[ \left| \int_0^t \psi_1 (x_{\es}^{(\pi_n)})   \left( \int_{\es}^s \psi_2 (x_u^{(\pi_n)}) dW_u  \right) \,ds\right|^2\right]
\le \frac{t}{2}   \|\psi_1\|_\infty^2 \|\psi_2\|_\infty^2 \cdot \|\pi_n\|^2.
\end{align*} 
\end{lemma}

\begin{proof}
Since $ \psi_1(x_{\underline{\tau}})$ is $\mathcal{F}_{\underline{\tau}}$-measurable for $\tau \in [0,T]$  we have
\begin{align*}
 & \E \left[ \psi_1(x_{\es})\int_{\es}^s  \psi_2(x_u) d W_u  \, \, \psi_1(x_\et) \int_{\et}^t \psi_2(x_v) dW_v \right] 
\\ &  \quad = \E \left [ \int_{\es}^s   \psi_1(x_{\es})  \psi_2(x_u) d W_u  \, \,\int_{\et}^t  \psi_1(x_\et)  \psi_2(x_v) dW_v \right ], \qquad s,t \in [0,T].
\end{align*} Assume now  that $\et \geq s $. 
Conditioning on $\mathcal{F}_{\et }$  yields
 that
\begin{align*}
 & \E \left[ \psi_1(x_{\es})\int_{\es}^s  \psi_2(x_u) d W_u  \, \, \psi_1(x_\et) \int_{\et}^t \psi_2(x_v) dW_v \right] 
\\ &  \quad = \E \left[  \int_{\es}^s   \psi_1(x_{\es})  \psi_2(x_u) d W_u \,\, \E \left[  \, \,\int_{\et}^t  \psi_1(x_\et)  \psi_2(x_v) dW_v  \, \Big{|} \,  \mathcal{F}_{\et} \right] \right ] =0,
\end{align*}
since $ \int_{\es}^s   \psi_1(x_{\es})  \psi_2(x_u) d W_u$ is  $\mathcal{F}_{\et }$-measurable and 
$$ \E \left[  \, \,\int_{\et}^t  \psi_1(x_\et)  \psi_2(x_v) dW_v \, \Big{|} \,  \mathcal{F}_{\et} \right] =0.$$
Let $\ell \in\{0,1,\dots,n\}$ and
assume w.l.o.g.~that $\et=t_\ell$. We have
\begin{align*} 
&\E \!\left[ \left| \int_0^{t} \psi_1 (x_{\es})   \left( \int_{\es}^s \psi_2 (x_u) dW_u  \right) \,ds\right|^2\right] \\
& \quad = \E \!\left[ \left| \sum_{k=0}^{\ell-1}\int_{t_k}^{t_{k+1}} \psi_1(x_{t_k})   \int_{t_k}^s \psi_2(x_u) dW_u \,ds       + \int_{t_{\ell}}^{t} \psi_1(x_{t_{\ell}})   \int_{t_{\ell}}^s \psi_2(x_u) dW_u \,ds   \right|^2\right]
\\&\quad
=\sum_{k=0}^{\ell-1} \sum_{m=0}^{\ell-1} \E \!\left[ \left(\int_{t_k}^{t_{k+1}} \psi_1(x_{t_k})   \int_{t_k}^s \psi_2(x_u) dW_u \,ds\right)
\left( \int_{t_m}^{t_{m+1}} \psi_1(x_{t_m})   \int_{t_m}^r \psi_2(x_v) dW_v \,dr \right)\right]
\\&\qquad+ 2
\sum_{k=0}^{\ell-1} \E \!\left[ \left(\int_{t_k}^{t_{k+1}} \psi_1(x_{t_k})   \int_{t_k}^s \psi_2(x_u) dW_u \,ds\right)
\left( \int_{t_\ell}^t \psi_1(x_{t_\ell})   \int_{t_\ell}^r \psi_2(x_v) dW_v \,dr \right)\right]
\\&\qquad+
 \E \!\left[ \left| \int_{t_{\ell}}^{t} \psi_1(x_{t_{\ell}})   \int_{t_{\ell}}^s \psi_2(x_u) dW_u \,ds\right|^2\right]
\\&\quad
=\sum_{k=0}^{\ell -1}\E \!\left[ \left|\int_{t_k}^{t_{k+1}}    \int_{t_k}^s \psi_1(x_{t_k}) \psi_2(x_u) dW_u \,ds\right|^2\right]
+
 \E \!\left[ \left| \int_{t_{\ell}}^{t}   \int_{t_{\ell}}^s   \psi_1(x_{t_{\ell}}) \psi_2(x_u) dW_u \,ds\right|^2\right].
\end{align*} 
Applying the Cauchy-Schwarz inequality and using  the \Ito-isometry and  the boundedness of $\psi_1,\psi_2$ yields
\begin{equation}  \label{E3-help3} 
\begin{aligned}
&\sum_{k=0}^{\ell-1}\E \!\left[ \left|\int_{t_k}^{t_{k+1}}    \int_{t_k}^s \psi_1(x_{t_k})  \psi_2(x_u) dW_u \,ds\right|^2\right]
+
 \E \!\left[ \left| \int_{t_{\ell}}^{t}   \int_{t_{\ell}}^s \psi_1(x_{t_{\ell}})  \psi_2(x_u) dW_u \,ds\right|^2\right]
\\&\le
\|\pi_n\| \cdot \left(\sum_{k=0}^{\ell-1} \int_{t_k}^{t_{k+1}} \E \!\left[ \left|    \int_{t_k}^s \psi_1(x_{t_k})  \psi_2(x_u) dW_u \right|^2\right] ds
+
\int_{t_\ell}^{t} \E \!\left[ \left|    \int_{t_\ell}^s \psi_1(x_{t_\ell})  \psi_2(x_u) dW_u \right|^2\right] ds
\right)
\\&\le
\|\pi_n\| \|\psi_1\|_\infty^2 \|\psi_2\|_\infty^2 \cdot \left(\sum_{k=0}^{\ell-1} \int_{t_k}^{t_{k+1}} (s-t_k) ds
+\int_{t_\ell}^{t} (s-t_\ell) ds
\right)
\\ &=
 \frac{1}{2} \|\pi_n\| \|\psi_1\|_\infty^2 \|\psi_2\|_\infty^2 \cdot \left(\sum_{k=0}^{\ell-1} (t_{k+1}-t_k)^2
 +(t-t_\ell)^2
 \right)
\\&\leq 
 \frac{1}{2} \|\pi_n\|^{2} \|\psi_1\|_\infty^2 \|\psi_2\|_\infty^2 \cdot \left(\sum_{k=0}^{\ell-1} (t_{k+1}-t_k)
 +(t-t_\ell)
 \right)
 \\&=
\frac{t}{2} \|\pi_n\|^2  \|\psi_1\|_\infty^2 \|\psi_2\|_\infty^2
.
\end{aligned}
\end{equation}

\end{proof}

\subsection{Reduction to a quadrature problem}

Now we relate the error of the EM scheme $x^{(\pi_n)}=(x^{(\pi_n)})_{t \in [0,T]}$ to the error of a weighted quadrature problem.

\begin{theorem}\label{main_1}  Let Assumption \ref{ass} hold. Then, for all $\varepsilon\in(0,1)$ there exists a constant $C^{(R)}_{\varepsilon, a,b,T}>0$ such that 
$$  \sup_{t \in [0,T]} \E\!\left[|X_t-x_t^{(\pi_n)}|^2\right] \leq  C^{(R)}_{\varepsilon, a,b,T} \cdot  \left(\|\pi_n\|^2 +  \sup_{t \in [0,T]} |\mathcal{W}_t^{(\pi_n)}|^{1-\varepsilon} \right),$$
where
\begin{align*}
\mathcal{W}^{(\pi_n)}_t &= \E\!\left[ \left| \int_0^t   \varphi'(W_s+\xi)  \left(b(W_s+\xi)-  b(W_\es+\xi)\right) ds \right|^2\right], \qquad t \in [0,T].
\end{align*}
\end{theorem}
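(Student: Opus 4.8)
The plan is to carry out the entire argument in the transformed coordinates $Y_t=\varphi(X_t)$ and $y_t=\varphi(x_t^{(\pi_n)})$, whose representations were recorded just above, and only transfer back to $X$ at the very end. Since $\varphi^{-1}\in C_b^1(\R)$ by Lemma \ref{smooth}\eqref{smooth-itb}, one has $|X_t-x_t^{(\pi_n)}|\le\|(\varphi^{-1})'\|_\infty\,|Y_t-y_t|$, so it suffices to control $\sup_{t\in[0,T]}\E[|Y_t-y_t|^2]$. Abbreviating $\tilde a:=(\varphi'a)\circ\varphi^{-1}$ and $\tilde\sigma:=\varphi'\circ\varphi^{-1}$, which are globally Lipschitz by Lemma \ref{smooth}\eqref{smooth-it2}, and setting
$$ R_t:=\int_0^t\varphi'(x_s)\bigl((a+b)(x_{\es})-(a+b)(x_s)\bigr)\,ds, $$
the two representations above give $Y_t-y_t=\int_0^t(\tilde a(Y_s)-\tilde a(y_s))\,ds+\int_0^t(\tilde\sigma(Y_s)-\tilde\sigma(y_s))\,dW_s-R_t$. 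Hence, by the Lipschitz bounds, the Cauchy--Schwarz inequality and the \Ito-isometry,
$$ \E[|Y_t-y_t|^2]\le C\int_0^t\E[|Y_s-y_s|^2]\,ds+C\,\E[|R_t|^2],\qquad t\in[0,T], $$
so by Gronwall's lemma the whole theorem reduces to the estimate $\sup_{t\in[0,T]}\E[|R_t|^2]\le C(\|\pi_n\|^2+\sup_{t\in[0,T]}|\mathcal{W}_t^{(\pi_n)}|^{1-\varepsilon})$.

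To obtain this I would split $R_t=R_t^{(a)}+R_t^{(b)}$ along $\mu=a+b$ and estimate the two pieces separately, using $\E[|R_t|^2]\le 2\E[|R_t^{(a)}|^2]+2\E[|R_t^{(b)}|^2]$. For the regular part $R_t^{(a)}=\int_0^t\varphi'(x_s)(a(x_{\es})-a(x_s))\,ds$, a second--order Taylor expansion of $a$ around $x_{\es}$, the identity $x_s-x_{\es}=\mu(x_{\es})(s-\es)+(W_s-W_{\es})$, the Lipschitz continuity of $\varphi'$ (recall $\varphi''=-2b\varphi'$, which is bounded), and the elementary moment bounds $\E[|x_s-x_{\es}|^2]\le C\|\pi_n\|$, $\E[|x_s-x_{\es}|^4]\le C\|\pi_n\|^2$ reduce matters, after Cauchy--Schwarz, to controlling the single term $\int_0^t\varphi'(x_{\es})a'(x_{\es})(W_s-W_{\es})\,ds=\int_0^t(\varphi'a')(x_{\es})\bigl(\int_{\es}^s\,dW_u\bigr)\,ds$, which is precisely of the form handled in Lemma \ref{step-decomp} with $\psi_1=\varphi'a'$ and $\psi_2\equiv1$. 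This yields $\sup_{t\in[0,T]}\E[|R_t^{(a)}|^2]\le C\|\pi_n\|^2$, i.e.\ the ``classical'' contribution.

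The irregular part $R_t^{(b)}=\int_0^t\varphi'(x_s)(b(x_{\es})-b(x_s))\,ds$ is a functional of the path of $x^{(\pi_n)}$, so I would pass to the Girsanov measure $\Q$ under which $x^{(\pi_n)}-\xi$ is a Brownian motion. Since $\Q$ and $\P$ agree on $\mathcal{F}_0$ (as $L_0^{(\pi_n)}=1$) and this Brownian motion is independent of $\mathcal{F}_0$ under $\Q$, while $\xi$ and $W$ are independent under $\P$, the process $x^{(\pi_n)}$ has, under $\Q$, the same law as $\xi+W$ under $\P$. Writing $\E[|R_t^{(b)}|^2]=\E_\Q\bigl[(L_T^{(\pi_n)})^{-1}|R_t^{(b)}|^2\bigr]$, applying H\"older's inequality with exponents $1/\varepsilon$ and $1/(1-\varepsilon)$ and invoking Lemma \ref{girsanov} for the first factor gives
$$ \E[|R_t^{(b)}|^2]\le c^{(L)}_{\mu,T,\varepsilon}\Bigl(\E_\Q\bigl[|R_t^{(b)}|^{2/(1-\varepsilon)}\bigr]\Bigr)^{1-\varepsilon}=c^{(L)}_{\mu,T,\varepsilon}\Bigl(\E\bigl[|Z_t|^{2/(1-\varepsilon)}\bigr]\Bigr)^{1-\varepsilon}, $$
where $Z_t:=\int_0^t\varphi'(W_s+\xi)(b(W_s+\xi)-b(W_{\es}+\xi))\,ds$, so that $\E[|Z_t|^2]=\mathcal{W}_t^{(\pi_n)}$. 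The crucial point is that $\varphi'$ and $b$ are bounded, whence $|Z_t|\le 2T\|\varphi'\|_\infty\|b\|_\infty=:M$ holds deterministically; therefore $\E[|Z_t|^{2/(1-\varepsilon)}]\le M^{2\varepsilon/(1-\varepsilon)}\E[|Z_t|^2]=M^{2\varepsilon/(1-\varepsilon)}\mathcal{W}_t^{(\pi_n)}$, so $\E[|R_t^{(b)}|^2]\le c^{(L)}_{\mu,T,\varepsilon}M^{2\varepsilon}\,|\mathcal{W}_t^{(\pi_n)}|^{1-\varepsilon}$. Adding the two contributions bounds $\sup_{t\in[0,T]}\E[|R_t|^2]$ as required, and Gronwall's lemma together with the transfer through $\varphi^{-1}$ concludes.

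The main obstacle is the irregular term $R_t^{(b)}$: because $b$ is only assumed bounded and integrable, no pointwise expansion is available, and the one handle we have is the change of measure under which the Euler scheme becomes a Brownian motion. The $1-\varepsilon$ loss in the final bound is exactly the price of H\"older's inequality paired against the negative moment of $L_T^{(\pi_n)}$ from Lemma \ref{girsanov}, and what makes this loss acceptable is the a priori deterministic bound on the quadrature integrand, which lets the $\tfrac{2}{1-\varepsilon}$--th moment of $Z_t$ be replaced by its second moment $\mathcal{W}_t^{(\pi_n)}$. A secondary difficulty is isolating the $O(\|\pi_n\|^2)$ part generated by $a$ cleanly from the quadrature error, which hinges on applying Lemma \ref{step-decomp} to the cross term $\varphi'(x_{\es})a'(x_{\es})(W_s-W_{\es})$.
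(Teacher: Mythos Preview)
Your proposal is correct and follows essentially the same route as the paper: transform via $\varphi$, apply Gronwall to reduce to $\sup_t\E[|R_t|^2]$, split along $\mu=a+b$, handle the $a$-part by an expansion together with Lemma~\ref{step-decomp}, and handle the $b$-part via the Girsanov change of measure, H\"older's inequality with exponents $1/\varepsilon$ and $1/(1-\varepsilon)$, Lemma~\ref{girsanov}, and the deterministic boundedness of the quadrature integrand. The only cosmetic difference is that the paper treats the regular part by first freezing $\varphi'(x_s)\to\varphi'(x_{\es})$ (its $\mathcal{E}_2$ term absorbs the difference via the fourth-moment bound) and then applies \Ito's formula to $a(x_s)-a(x_{\es})$, which leads to Lemma~\ref{step-decomp} with $\psi_1=\varphi'$, $\psi_2=a'$; your Taylor expansion of $a$ combined with the explicit Euler increment $x_s-x_{\es}=\mu(x_{\es})(s-\es)+(W_s-W_{\es})$ leads instead to $\psi_1=\varphi'a'$, $\psi_2\equiv1$, which is an equally valid application of the same lemma.
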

\medskip

\begin{proof}
\textit{ Step 1.}
First note that by Lemma \ref{smooth} we have
\begin{align}\label{trafoappl}
  \E\left[|X_t-x_t|^2 \right] = \E\left[\left|\varphi^{-1}(Y_t)-\varphi^{-1}(y_t)\right|^2\right] 
   \le L_{\varphi^{-1}}^2  \E\left[|Y_t-y_t|^2 \right], \qquad t \in [0,T].
   \end{align}
Furthermore, we have for all $t \in [0,T]$ that
\begin{equation} \label{eq:Y-y}
\begin{aligned}
Y_t-y_t  = E_t &+ \int_0^t \left( (\varphi'a)(\varphi^{-1}(Y_s))-   (\varphi'a)( \varphi^{-1}(y_s) )\right) ds \\ & + \int_0^t \left( \varphi' ( \varphi^{-1}(Y_s))-  \varphi' ( \varphi^{-1}(y_s)) \right) dW_s,
\end{aligned}
\end{equation}
where
$$ E_t= \int_0^t  \varphi'(x_s) \left((a+b)(x_s)- (a+b)(x_{\es}) \right) ds.$$
Applying  the representation \eqref{eq:Y-y}, the Cauchy-Schwarz inequality, the \Ito-isometry,  and  Lemma \ref{smooth}  we obtain  for all $t\in [0,T]$
that
\begin{align*}
  \E\left[|Y_t-y_t|^2 \right] 
&\leq  3 \E\!\left[|E_t|^2 \right]   +   3  \E \left[ \left|
\int_0^t \left( (\varphi'a)(\varphi^{-1}(Y_s))-  (\varphi'a)(\varphi^{-1}(y_s)) \right) ds \right|^2 \right] 
	 \\ & \qquad \qquad \quad \,\, + 3  \E \left[ \left| \int_0^t \left( \varphi'(\varphi^{-1}(Y_s))-  \varphi'(\varphi^{-1}(y_s)) \right) dW_s\right|^2 \right] 
\\&\leq  3     \E\left[|E_t|^2 \right] 
+   3  \left(T L_{(\varphi'a) \circ \varphi^{-1}}^2 +  L_{\varphi' \circ \varphi^{-1}}^2 \right)
\int_0^t \E \left[ \left| Y_s -y _s \right|^2 \right ] ds  \\ & \leq   3  \sup_{u \in [0,t]} \E\left[|E_u|^2 \right] 
+   3  \left(T L_{(\varphi'a) \circ \varphi^{-1}}^2 +  L_{\varphi' \circ \varphi^{-1}}^2 \right)
\int_0^t   \sup_{u \in [0,s]} \E \left[ \left| Y_u-y _u \right|^2 \right ] ds  
.
\end{align*} 
This estimate, Gronwall's lemma, and \eqref{trafoappl} establish that there exists a constant $c^{(1)}_{a,b,T}>0$ such that 
\begin{align}\label{est1}
    \sup_{t \in [0,T]}\E\left[|X_t-x_t|^2 \right] 
\le c^{(1)}_{a,b,T} \,      \sup_{t \in [0,T]} \E\left[|E_t|^2 \right].  
\end{align}
Clearly, we have that
\begin{align}\label{SumE}  \E\left[|E_{t}|^2 \right] =
\E\left[\left|\int_0^{t}  \varphi'(x_s) \left((a+b)(x_s)- (a+b)(x_{\es}) \right) ds\right|^2 \right]
 \leq 3(\mathcal{E}_1(t)+\mathcal{E}_2(t)+\mathcal{E}_3(t)),
 \end{align}
where
\begin{align*}
 \mathcal{E}_1(t) &= \E\left[\left|\int_0^{t}   \varphi'(x_{\es})  \left( a(x_s)-  a(x_{\es}) \right) ds\right|^2 \right], \\
 \mathcal{E}_2(t) &= \E\left[\left|\int_0^{t}  \left( \varphi'(x_s) -  \varphi'(x_{\es})  \right) \left(  a(x_s)- a(x_{\es}) \right) ds\right|^2 \right],\\
  \mathcal{E}_3(t) &= \E\left[\left|\int_0^{t}\varphi'(x_s)  \left(b(x_s)-  b(x_{\es})\right) ds\right|^2 \right].
\end{align*}

We will first deal with $\mathcal{E}_1$ and $\mathcal{E}_2$ using standard tools, then we will rewrite 
$\mathcal{E}_3$ using a Girsanov transform.\\

\textit{Step 2.} For estimating $\mathcal{E}_1$ and $\mathcal{E}_2$ note that for all $s \in [0,T]$ we have
\begin{equation} \label{fourthmoment}
\begin{aligned}
  \E\!\left[|x_s-x_{\es}|^4 \right] & =  \E\!\left[\left| \int_\es^s(a+b)(x_\et) dt+(W_s-W_\es)  \right|^4 \right]
  \\&\quad\le8 \, \E\!\left[\left| \int_\es^s |(a+b)(x_\et)| dt\right|^4\right]+8\,\E\!\left[|W_s-W_\es |^4 \right]
    \\&\quad\le8(s-\es)^4  \E \left[ \sup_{t \in [0,T]}|(a+b)(x_\et)|^4 \right]+24 (s-\es)^2
    \\&\quad\le8 \|a+b\|_\infty^4 \|\pi_n\|^4+24 \|\pi_n\|^2.
\end{aligned}
\end{equation} 
To estimate $\mathcal{E}_2$ we apply the Cauchy-Schwarz inequality and   $ 2 xy \leq x^2 +y^2$   to obtain that
\begin{align*}
\mathcal{E}_2{(t)}&\leq {t}  \int_0^{t}\E\!\left[\left| \varphi'(x_s) -  \varphi'(x_{\es})  \right|^2 \left|  a(x_s)- a(x_{\es})  \right|^2\right] ds\\
&\le \frac{{t}}{2}  \int_0^{t} \E\!\left[\left|  \varphi'(x_s) -  \varphi'(x_{\es})  \right|^4 + \left| a(x_s)- a(x_{\es}) \right|^4\right] ds.
\end{align*}
Since $\varphi'$ and $a$ are globally Lipschitz, \eqref{fourthmoment} yields
\begin{equation}
\begin{aligned} 
 \label{E4}
\mathcal{E}_2{(t)} & \leq \frac{t}{2} \left( L_{\varphi'}^4+ L_{a}^4 \right) \int_0^{t} \E\!\left[|x_s-x_{\es}|^4 \right]ds  \\ & \leq 
 \frac{t^2}{2} \left( L_{\varphi'}^4+ L_{a}^4 \right) \left( 8 \|a+b\|_\infty^4 \|\pi_n\|^4+24 \|\pi_n\|^2 \right).
\end{aligned} 
\end{equation}
Recall that $a \in C^2_b(\mathbb{R})$. So, It\=o's formula  yields
\begin{align*}
&\int_0^{t}  \varphi'(x_{\es})  \left( a(x_s)-  a(x_{\es}) \right) ds \\ &  \quad= \int_0^{t} \varphi'(x_{\es}) \left( 
\int_{\es}^s \left(a'(x_u) (a+b)(x_{\es}) + \frac{1}{2}a''(x_u)\right) du + \int_{\es}^s a'(x_u) d W_u \right)ds.
\end{align*}
Hence, we have
\begin{equation}\label{E3-help1}
\begin{aligned}
\mathcal{E}_1{(t)}  & \leq 2\, \E \!\left[\left|   \int_0^{t} \varphi'(x_{\es}) 
\int_{\es}^s \left(a'(x_u) (a+b)(x_{\es}) +\frac{1}{2} a''(x_u) \right) du \, ds\right|^2 \right] 
\\ & \quad+
 2\, \E \!\left[ \left| \int_0^{t}  \varphi'(x_\es)   \int_{\es}^s a'(x_u) dW_u \,ds\right|^2\right] 
\\& \leq 
 2\, \E \!\left[\left|   \int_0^{t}  \int_{\es}^s  \left| \varphi'(x_{\es})\right| 
 \left| a'(x_u) (a+b)(x_{\es}) +\frac{1}{2} a''(x_u) \right|   du \, ds\right|^2 \right] 
\\ & \quad+
 2\, \E \!\left[ \left| \int_0^{t}  \varphi'(x_\es)   \int_{\es}^s a'(x_u) dW_u \,ds\right|^2\right] .
\end{aligned} 
\end{equation}
Using that
  $a,b,a',a'',\varphi'$ are bounded,  gives 
$$ \sup_{u,s \in[0,T]}  \left| \varphi'(x_{\es})\right| 
 \left| a'(x_u) (a+b)(x_{\es}) +\frac{1}{2} a''(x_u) \right|  \leq 
  \|\varphi'\|_\infty  \left( \|a'\|_\infty \, \|a+b\|_\infty + \frac{1}{2}\|a''\|_\infty \right).
 $$  So we obtain 
\begin{equation}\label{E3-help2}
\begin{aligned}
& \E \!\left[\left|   \int_0^{t} \int_{\es}^s \varphi'(x_{\es}) 
 \left(a'(x_u) (a+b)(x_{\es}) +\frac{1}{2} a''(x_u)\right) du \, ds\right|^2 \right] \\ & \qquad  \leq {t}^2 \|\varphi'\|_\infty^2  \left( \|a'\|_\infty \, \|a+b\|_\infty + \frac{1}{2}\|a''\|_\infty \right)^2  \|\pi_n\|^2.
\end{aligned} 
\end{equation}
Combining \eqref{E3-help1} with \eqref{E3-help2} and applying Lemma \ref{step-decomp} to the second summand of   \eqref{E3-help1} yield
\begin{align}\label{E3}
 \mathcal{E}_1(t)\le 2{t}^2 \|\varphi'\|_\infty^2  \left( \|a'\|_\infty \, \|a+b\|_\infty + \frac{1}{2}\|a''\|_\infty \right)^2  \|\pi_n\|^2
 + {t} \|a'\|_\infty^2 \|\varphi'\|_\infty^2  \|\pi_n\|^2.
\end{align}
Thus, \eqref{E3} and \eqref{E4} imply that there exists a constant $c^{(2)}_{a,b,T}>0$ such that
\begin{align}\label{est2}
\mathcal{E}_1(t) +\mathcal{E}_2(t)
\le c^{(2)}_{a,b,T} \, \|\pi_n\|^2
\end{align}
for all $t \in [0,T]$.
So, combining \eqref{est1}, \eqref{SumE}, and \eqref{est2}, we obtain that there exists a constant $c_{ a,b,T}^{(3)} >0$ such that
\begin{align}\label{mainest1} \sup_{t \in [0,T]}
\E\!\left[|X_t-x_t|^2 \right]\leq   c_{ a,b,T}^{(3)} \left( \|\pi_n\|^2 + \sup_{t \in [0,T]} \mathcal{E}_3(t) \right).
\end{align}

\textit{Step 3:}  Now we use the Girsanov-transform with density $L_T=L_T^{(\pi_n)}$ as in \eqref{rndensity}, i.e.~as before we change the measure to $\mathbb{Q}$ to replace the Euler scheme $x=x^{(\pi_n)}$ by $W+\xi$.
For $\varepsilon \in (0,1)$, H\"older's inequality and Lemma \ref{girsanov} yield 
\begin{align*}
 \mathcal{E}_3(t)  &= \E_\Q\left[L_T^{-1} \left|\int_0^T \mathbf{1}_{[0,t]}(s) \varphi'(x_s)  \left(b(x_s)-  b(x_{\es})\right)ds\right|^2 \right]
\\&\le  \left( \E_\Q\!\left[|L_T|^{-\frac{1}{\varepsilon}}\right]\right) ^{\!\varepsilon}   \left( \E_\Q \!\left[ \left| \int_0^t   \varphi'(x_s)  \left(b(x_s)-  b(x_{\es})\right)ds \right|^{\frac{2}{1-\varepsilon}} \right]\right)^{\!1-\varepsilon}
\\ & \leq c^{(L)}_{\mu,T,\varepsilon}  \left( \E \!\left[ \left| \int_0^t  \varphi'(W_s+\xi)  \left(b(W_s+\xi)-  b(W_\es+\xi)\right)ds\right|^{\frac{2}{1-\varepsilon}} \right]\right)^{\!1-\varepsilon}.
\end{align*} Note that  $c^{(L)}_{\mu,T,\varepsilon}$ is independent of $\pi_n$.
Since 
\begin{align*}
 & \left| \int_0^{t}   \varphi'(W_s+\xi)  \left(b(W_s+\xi)-  b(W_\es+\xi)\right) ds \right|^{\frac{2}{1-\varepsilon}} \\& 
\,\, =     \left| \int_0^{t}       \varphi'(W_s+\xi)  \left(b(W_s+\xi)-  b(W_\es+\xi)\right)ds \right|^{\frac{2\varepsilon}{1-\varepsilon}}   \left| \int_0^{t}       \varphi'(W_s+\xi)  \left(b(W_s+\xi)-  b(W_\es+\xi)\right)ds \right|^2 
\\
& \,\,  \leq   (2 {t}    \| \varphi' b\|_{\infty})^{\frac{2\varepsilon}{1-\varepsilon}} \left| \int_0^{t}       \varphi'(W_s+\xi)  \left(b(W_s+\xi)-  b(W_\es+\xi)\right)ds \right|^2,
\end{align*}
we obtain for all $t \in [0,T]$,
\begin{align}\label{mainest2}   \mathcal{E}_3{(t)} \leq  c^{(L)}_{\mu,T,\varepsilon}  (2t\| \varphi' b\|_{\infty})^{2\varepsilon} (\mathcal{W}^{(\pi_n)}_{t})^{1-\varepsilon}. \end{align}

Combining \eqref{mainest1} and \eqref{mainest2} proves the theorem.

\end{proof}

\begin{remark}\label{quad_rem}
The term $\mathcal{W}^{(\pi_n)}_{t}$ corresponds to the mean-square error of a  weighted quadrature problem, namely the prediction of
 $$ I= \int_0^T \mathcal{Y}_s Z_s  ds$$ 
by the quadrature rule $${I}^{(\pi_n)}= \sum_{k=0}^{n-1}  Z_{t_k} \int_{t_k}^{t_{k+1}} \mathcal{Y}_s ds, $$ where $$\mathcal{Y}_t= \varphi'(W_t + \xi)=  \exp\left(-2\int_0^{W_t+\xi} b(z)dz \right) , \qquad t \in [0,T],$$ is a  random weight function, 
and  the process $Z$ given by $$Z_t=b(W_t+ \xi), \qquad t \in [0,T],$$ is  evaluated at $t_0, \ldots, t_{n-1}$.
 Related unweighted integration problems, i.e.~with $\mathcal{Y}=1$ and  $Z$  given by irregular functions of  stochastic processes   such as (fractional) Brownian motion, SDE solutions, or  general Markov processes, have recently been studied in
\cite{ngo2011,Kohatsu-higa-etal,altmeyer2017}. 
In particular, Sobolev-Slobodeckij spaces have been used in this context by \citet{altmeyer2017}.

 The study  of  quadrature problems for stochastic processes goes back to the seminal works of \citet{sy1,sy2,sacks_3,sy3}.
 
Note also that the approximation of It\=o-integrals of the form 
$\int_0^1 g(s)dW_s$, where $g$ has fractional Sobolev regularity of order $\kappa \in (0,1)$ by means of a Riemann-Maruyama approximation based on a randomly shifted grid has been studied in \cite{raphael}.
\end{remark}

\section{Analysis of the quadrature problem}

For the analysis of 
\begin{align*}
\mathcal{W}^{(\pi_n)}_{t} &= \E\!\left[ \left| \int_0^t   \varphi'(W_s+\xi)  \left(b(W_s+\xi)-  b(W_\es+\xi)\right) ds \right|^2\right], \qquad t \in [0,T],
\end{align*}
we assume additionally Assumption \ref{ass_b}, i.e.~that the irregular part of the drift has Sobolev-Slobodeckij regularity of order $\kappa\in(0,1)$.

\medskip

\subsection{Analytic preliminaries}

As a preparation we need:

\begin{lemma} \label{sobolev}
  Let Assumptions \ref{ass} and \ref{ass_b}   hold. Then we have
$  |\varphi' b|_{\kappa}< \infty$.
\end{lemma}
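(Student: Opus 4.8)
The plan is to show that multiplying $b$ by the bounded, Lipschitz factor $\varphi'$ preserves the finiteness of the Sobolev-Slobodeckij semi-norm. First I would write out the difference quotient for the product: for $x \neq y$,
\[
\varphi'(x)b(x) - \varphi'(y)b(y) = \varphi'(x)\bigl(b(x)-b(y)\bigr) + b(y)\bigl(\varphi'(x)-\varphi'(y)\bigr).
\]
Squaring and using $(u+v)^2 \le 2u^2 + 2v^2$ gives
\[
|\varphi'(x)b(x)-\varphi'(y)b(y)|^2 \le 2\|\varphi'\|_\infty^2 \, |b(x)-b(y)|^2 + 2\|b\|_\infty^2 \, |\varphi'(x)-\varphi'(y)|^2,
\]
where both $\|\varphi'\|_\infty$ and $\|b\|_\infty$ are finite by Lemma \ref{smooth}\eqref{smooth-it1} and Assumption \ref{ass}\eqref{ass-bd}. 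Dividing by $|x-y|^{2\kappa+1}$ and integrating over $\R^2$ splits the bound into two pieces.

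The first piece is exactly $2\|\varphi'\|_\infty^2 |b|_\kappa^2$, which is finite by Assumption \ref{ass_b}. The second piece, $2\|b\|_\infty^2 \int_\R\int_\R |\varphi'(x)-\varphi'(y)|^2 |x-y|^{-2\kappa-1}\,dx\,dy$, is the one requiring care, and I expect this to be the main obstacle since $\varphi'$ is only Lipschitz (not compactly supported), so the naive estimate $|\varphi'(x)-\varphi'(y)|^2 \le L_{\varphi'}^2 |x-y|^2$ alone does not make the integral converge at infinity — the integrand behaves like $|x-y|^{1-2\kappa}$, which is not integrable over all of $\R^2$. The resolution is to use the $L^1$-decay of $\varphi''=-2b\varphi'$: since $b \in L^1(\R)$ and $\varphi'$ is bounded, $\varphi'' \in L^1(\R)$, and in fact $\varphi'$ is constant outside the (not necessarily bounded, but $L^1$-small) support region of $b$ in the sense that $\varphi'(x)-\varphi'(y) = \int_y^x \varphi''(z)\,dz = -2\int_y^x b(z)\varphi'(z)\,dz$. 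Thus $|\varphi'(x)-\varphi'(y)| \le 2\|\varphi'\|_\infty \int_{x\wedge y}^{x\vee y}|b(z)|\,dz$, and combining this with the Lipschitz bound (for small $|x-y|$) and with this integral bound (for large $|x-y|$) should control the double integral.

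Concretely, I would split the domain of integration into $\{|x-y|\le 1\}$ and $\{|x-y|>1\}$. On $\{|x-y|\le 1\}$ use $|\varphi'(x)-\varphi'(y)|^2 \le L_{\varphi'}^2|x-y|^2$ together with the more refined bound $|\varphi'(x)-\varphi'(y)| \le 2\|\varphi'\|_\infty\int_{x\wedge y}^{x\vee y}|b(z)|\,dz$ to get, say, $|\varphi'(x)-\varphi'(y)|^2 \le 2L_{\varphi'}\|\varphi'\|_\infty |x-y| \int_{x\wedge y}^{x\vee y}|b(z)|\,dz$; dividing by $|x-y|^{2\kappa+1}$ and integrating, a Fubini argument in the variable $z$ bounds this by a constant times $\|b\|_{L^1}$ for $\kappa<1$. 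On $\{|x-y|>1\}$ use $|\varphi'(x)-\varphi'(y)|^2 \le \bigl(2\|\varphi'\|_\infty\int_{x\wedge y}^{x\vee y}|b(z)|\,dz\bigr)\cdot\bigl(2\|\varphi'\|_\infty\|b\|_{L^1}\bigr)$ together with $|x-y|^{-2\kappa-1}\le |x-y|^{-1}$ and again Fubini in $z$ to reduce to $\|b\|_{L^1}^2$ times a convergent integral. Summing the two regions and the first piece yields $|\varphi'b|_\kappa < \infty$, completing the proof. The only genuinely delicate point is choosing the interpolation between the Lipschitz and $L^1$-based bounds so that the exponent on $|x-y|$ lands in the integrable range near the diagonal; everything else is routine.
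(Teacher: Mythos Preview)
Your decomposition and the treatment of the first term are fine, but the second term has a genuine gap. By bounding $|b(y)|$ by $\|b\|_\infty$ you reduce to showing $|\varphi'|_\kappa<\infty$, and this is \emph{false} in general for $\kappa\le 1/2$. Indeed, $\varphi'(x)=\exp(-2\int_0^x b)$ tends to limits $c_\pm=\exp(\mp 2\int_0^{\pm\infty}b)$ at $\pm\infty$, and unless $\int_{\R}b=0$ these differ; then for $x\gg 0$, $y\ll 0$ the numerator $|\varphi'(x)-\varphi'(y)|^2$ stays bounded away from zero while $|x-y|^{-2\kappa-1}$ does not decay fast enough, and the double integral diverges. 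Your attempted repair via $|\varphi'(x)-\varphi'(y)|\le 2\|\varphi'\|_\infty\int_{x\wedge y}^{x\vee y}|b|$ does not rescue this: after Fubini in $z$ one gets (with $h=x-y$) $\int_\R\int_y^{y+h}|b(z)|\,dz\,dy=h\|b\|_{L^1}$, so the far-region integral becomes $\|b\|_{L^1}\int_1^\infty h^{-2\kappa}\,dh$, finite only for $\kappa>1/2$. Your additional step $|x-y|^{-2\kappa-1}\le |x-y|^{-1}$ is a valid inequality on $\{|x-y|>1\}$ but goes in the wrong direction---it discards decay and makes the integral diverge for every $\kappa$.

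The fix, and this is what the paper does, is simply not to throw away the factor $b(y)$. Keep the second term as $|b(y)|^2|\varphi'(x)-\varphi'(y)|^2$ and use that $b\in L^1\cap L^\infty\subset L^2$. Then for $|x-y|\le 1$ the Lipschitz bound gives an inner $x$-integral that is a finite constant, and for $|x-y|>1$ the boundedness of $\varphi'$ gives an inner $x$-integral bounded by $C/\kappa$; in both cases the remaining outer integral is $\int_\R|b(y)|^2\,dy=\|b\|_{L^2}^2<\infty$, and this works uniformly for all $\kappa\in(0,1)$.
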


\begin{proof} 
We  can write
\begin{align*}
 (\varphi'b)(x)- (\varphi'b)(y) &= \varphi'(x)(b(x)-b(y))  + b(y) (\varphi'(x)-\varphi'(y)).
\end{align*}
Since $\varphi'$ is bounded,  we have that
$$
  \int_{\mathbb{R}} \int_{\mathbb{R}} \frac{ |\varphi'(x )(b(x )-b(y))|^2  }{|x-y|^{1+ 2 \kappa}} dx dy   
\leq 
 \|\varphi'\|_\infty^2 |b|_{\kappa}^2.
$$
Moreover, the boundedness of $\varphi''$  implies
$$|  b(y) (\varphi'(x)-\varphi'(y))|^2  \leq  |b(y)|^2 \|\varphi''\|_{\infty}^2 |x-y|^2. $$ 
Since $b$ is bounded and $b \in L^1(\mathbb{R})$, it follows that $b \in L^2(\mathbb{R})$.
Hence, for all $ \kappa \in (0,1)$ we have
\begin{align*}
 \int_{\mathbb{R}} \int_{y-1}^{y+1} \frac{|b(y) (\varphi'(x)-\varphi'(y))|^2}{|x-y|^{1+2\kappa}} dx dy 
&\le 2 \|\varphi''\|_{\infty}^2  \int_{\mathbb{R}}  |b(y)|^2 \int_{y}^{y+1} |x-y|^{1-2\kappa} dx dy \\
& = \frac{1}{1-\kappa}  \|\varphi''\|_{\infty}^2  \| b \|_{L^2}^2 < \infty.
\end{align*}
Furthermore, the boundedness of $\varphi'$ yields
\begin{align*}
 \int_{\mathbb{R}} \int_{y+1}^{\infty} \frac{|b(y) (\varphi'(x)-\varphi'(y))|^2}{|x-y|^{1+2\kappa}} dx dy 
&\leq  {4} \|\varphi'\|_{\infty}^2  \int_{\mathbb{R}}  |b(y)|^2 \int_{y+1}^{\infty}  |x-y|^{-1-2\kappa} dx dy \\
& =   \frac{2}{\kappa}    \|\varphi'\|_{\infty}^2   \| b \|_{L^2}^2< \infty,
\end{align*}
and analogously
\begin{align*}
 \int_{\mathbb{R}} \int^{y-1}_{-\infty} \frac{|b(y) (\varphi'(x)-\varphi'(y))|^2}{|x-y|^{1+2\kappa}} dx dy 
 \le \frac{2}{\kappa}  \|\varphi'\|_{\infty}^2   \| b \|_{L^2}^2< \infty.
\end{align*}
Thus, the assertion follows.
\end{proof}

Since the Sobolev-Slobodeckij semi-norm is shift invariant, Lemma  \ref{sobolev} also yields:

\begin{corollary} \label{sobolev-2}
  Let Assumptions \ref{ass} and \ref{ass_b}   hold. Then we have $\mathbb{P}$-a.s. that
$$ |\varphi'b(\cdot + \xi)|_{\kappa} = |\varphi'b|_{\kappa}< \infty .$$
\end{corollary}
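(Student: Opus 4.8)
The plan is to derive this immediately from Lemma \ref{sobolev} together with the translation invariance of the Sobolev-Slobodeckij semi-norm, applied pathwise in $\omega$. Lemma \ref{sobolev} already gives $|\varphi'b|_\kappa < \infty$, so the only substantive point is the identity $|\varphi'b(\cdot + \xi)|_\kappa = |\varphi'b|_\kappa$, which has to be read as an almost sure equality of random variables obtained by fixing a path.

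First I would record the purely deterministic fact: for any measurable $f\colon\mathbb{R}\to\mathbb{R}$ and any fixed constant $c\in\mathbb{R}$, the change of variables $(x,y)\mapsto(x+c,y+c)$ in
$$ |f(\cdot + c)|_\kappa^2 = \int_{\mathbb{R}}\int_{\mathbb{R}} \frac{|f(x+c)-f(y+c)|^2}{|x-y|^{2\kappa+1}}\,dx\,dy $$
leaves the Lebesgue measure $dx\,dy$ invariant and, since $(x+c)-(y+c)=x-y$, leaves the denominator $|x-y|^{2\kappa+1}$ unchanged as well; hence $|f(\cdot+c)|_\kappa = |f|_\kappa$ (with the convention that both sides may be $+\infty$ simultaneously). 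Applying this with $f=\varphi'b$ gives $|\varphi'b(\cdot+c)|_\kappa = |\varphi'b|_\kappa$ for every fixed $c$.

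Then I would invoke measurability of the initial value: since $\xi\in L^2(\Omega,\mathcal{F}_0,\mathbb{P})$, there is a $\mathbb{P}$-null set outside of which $\xi(\omega)$ is a finite real number, and for each such $\omega$ we may take $c=\xi(\omega)$ in the identity above, obtaining $|\varphi'b(\cdot+\xi(\omega))|_\kappa = |\varphi'b|_\kappa$. Combined with $|\varphi'b|_\kappa<\infty$ from Lemma \ref{sobolev}, this yields the assertion.

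There is essentially no obstacle here; the only care needed is purely in the interpretation — on the left-hand side $\varphi'b(\cdot+\xi)$ is, for fixed $\omega$, a deterministic function whose semi-norm is then computed, and the equality is claimed only up to a $\mathbb{P}$-null set. If one wished to regard $\omega\mapsto|\varphi'b(\cdot+\xi)|_\kappa$ as a genuine (finite) random variable one would additionally note joint measurability of $(\omega,x)\mapsto(\varphi'b)(x+\xi(\omega))$, which is immediate from measurability of $\varphi'b$ and of $\xi$, but this is not required for the stated almost sure identity.
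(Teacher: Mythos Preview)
Your proof is correct and follows exactly the paper's reasoning: the paper simply notes that the Sobolev--Slobodeckij semi-norm is shift invariant and invokes Lemma~\ref{sobolev}, which is precisely the deterministic translation-invariance argument you spell out pathwise in~$\omega$.
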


In the following, we will frequently use that for all $p\geq 0$ there exists a constant $c_p > 0$ such that  for all $w \in\R$ we have
\begin{align}\label{exp-est}
|w|^p \exp(-w^2/2) \leq c_p \exp(-w^2/4).
\end{align}

A crucial tool will be the following bound on the Gaussian density:

\begin{lemma} \label{lem_pab} Let  $t>s>0$ and 
\begin{align}\label{pts}
p_{t,s}(x,y)= \frac{1}{2 \pi} \frac{1}{\sqrt{s(t-s)}} \exp \left(  - \frac{(x-y)^2}{2(t-s)} - \frac{y^2}{2s} \right), \qquad x,y \in \mathbb{R}.
\end{align}
Then we have
\begin{align}\label{p-deriv}
 \frac{ \partial^2}{\partial t \partial s} p_{t,s}(x,y)&= \nonumber\frac{1}{4}  p_{t,s}(x,y) \left( \frac{y^2}{s^2} - \frac{1}{s} \right) \left( \frac{(y-x)^2}{(t-s)^2} - \frac{1}{t-s} \right) \\ & \qquad  - \frac{1}{4}  p_{t,s}(x,y) \left( \frac{(y-x)^2}{(t-s)^2} - \frac{1}{t-s} \right)^2  \\ & \qquad + \frac{1}{2}  p_{t,s}(x,y)  \left( \frac{2(y-x)^2}{(t-s)^3} - \frac{1}{(t-s)^2} \right) \nonumber
\end{align}
and there exists a constant $\densityconst>0$ such that 
\begin{align} \label{est-pab} 
 -|x-y|^{1+2\kappa}  \frac{ \partial^2}{\partial t \partial s} p_{t,s}(x,y) \leq  \densityconst \left( |t-s|^{\kappa -2} s^{-1/2}+ |t-s|^{\kappa -1} s^{-3/2} \right).
\end{align}
\end{lemma}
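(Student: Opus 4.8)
The plan is to prove both assertions by direct computation: first establish the identity \eqref{p-deriv} by differentiating the explicit Gaussian, and then obtain \eqref{est-pab} by rescaling the two spatial increments so that the exponential becomes standard and applying \eqref{exp-est}.

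\textbf{The identity.} I would note that $p_{t,s}(x,y)$ from \eqref{pts} factorizes as $\frac{1}{2\pi}\phi(s)\psi(t-s)$ with $\phi(u)=u^{-1/2}\exp(-y^2/(2u))$ and $\psi(u)=u^{-1/2}\exp(-(x-y)^2/(2u))$. Since the argument of the second factor is $t-s$ and $\partial(t-s)/\partial t=1$, $\partial(t-s)/\partial s=-1$, the chain rule gives $\frac{\partial}{\partial t}p_{t,s}=\frac{1}{2\pi}\phi(s)\psi'(t-s)$ and hence $\frac{\partial^2}{\partial t\partial s}p_{t,s}=\frac{1}{2\pi}\bigl(\phi'(s)\psi'(t-s)-\phi(s)\psi''(t-s)\bigr)$. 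The logarithmic derivatives are elementary: $\phi'/\phi=\tfrac12(y^2/s^2-1/s)$, $\psi'/\psi=\tfrac12\bigl((x-y)^2/(t-s)^2-1/(t-s)\bigr)$, and differentiating once more $\psi''/\psi=\tfrac14\bigl((x-y)^2/(t-s)^2-1/(t-s)\bigr)^2-\tfrac12\bigl(2(x-y)^2/(t-s)^3-1/(t-s)^2\bigr)$. Substituting these back, using $\frac{1}{2\pi}\phi(s)\psi(t-s)=p_{t,s}(x,y)$ and $(x-y)^2=(y-x)^2$, and collecting the three resulting terms yields exactly \eqref{p-deriv}.

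\textbf{The estimate.} Introduce $w=(x-y)/\sqrt{t-s}$ and $v=y/\sqrt s$, so that $p_{t,s}(x,y)=\frac{1}{2\pi}s^{-1/2}(t-s)^{-1/2}e^{-w^2/2-v^2/2}$, while the three bracketed factors in \eqref{p-deriv} equal, respectively, $\frac{(v^2-1)(w^2-1)}{s(t-s)}$, $\frac{(w^2-1)^2}{(t-s)^2}$ and $\frac{2w^2-1}{(t-s)^2}$, up to the constants $\tfrac14,-\tfrac14,\tfrac12$. Multiplying \eqref{p-deriv} by $-|x-y|^{1+2\kappa}=-|w|^{1+2\kappa}(t-s)^{1/2+\kappa}$ and collecting the powers of $t-s$ and $s$ shows that $-|x-y|^{1+2\kappa}\frac{\partial^2}{\partial t\partial s}p_{t,s}(x,y)$ equals $(t-s)^{\kappa-1}s^{-3/2}$ times $e^{-w^2/2-v^2/2}$ times a polynomial in $(w,v)$, plus $(t-s)^{\kappa-2}s^{-1/2}$ times $e^{-w^2/2-v^2/2}$ times a polynomial in $w$, with explicit numerical coefficients. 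Since $(t-s)^{\kappa-1}s^{-3/2}$ and $(t-s)^{\kappa-2}s^{-1/2}$ are nonnegative, it suffices to bound the two $(w,v)$-dependent prefactors in absolute value; by \eqref{exp-est} every polynomial in $w$ multiplied by $e^{-w^2/2}$ is bounded on $\R$, and likewise for $v$, so both prefactors are bounded uniformly in $x,y$. Taking $\densityconst$ to be the resulting finite constant gives \eqref{est-pab}.

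\textbf{Main obstacle.} There is none of substance — the argument is a mechanical differentiation followed by a scaling. The only points requiring care are the sign produced by the chain rule (the $-\psi''$ in $\frac{\partial^2}{\partial t\partial s}p_{t,s}$, coming from $\partial(t-s)/\partial s=-1$), and the fact that \eqref{est-pab} is a one-sided inequality, so one discards the actual signs of the $(w,v)$-prefactors and estimates them in modulus; this is legitimate precisely because the two target quantities $(t-s)^{\kappa-1}s^{-3/2}$ and $(t-s)^{\kappa-2}s^{-1/2}$ on the right-hand side are nonnegative.
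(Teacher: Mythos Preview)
Your proposal is correct and follows essentially the same approach as the paper: direct differentiation for \eqref{p-deriv} (the paper just says ``straightforward calculations''; your logarithmic-derivative bookkeeping is a clean way to organize it), and for \eqref{est-pab} rescaling via $w=(x-y)/\sqrt{t-s}$, $v=y/\sqrt s$ followed by \eqref{exp-est}. The only cosmetic difference is that the paper first expands $-\partial_t\partial_s p_{t,s}$ and drops negative terms before introducing the rescaled variables term by term, whereas you introduce $w,v$ at the outset and bound the two $(w,v)$-prefactors in modulus; both lead to the same conclusion.
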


\begin{proof}

Straightforward calculations yield the first assertion \eqref{p-deriv}.

Moreover, we have
\begin{equation}\label{help-densityp}
\begin{aligned}
 &-|x-y|^{1+2\kappa}  \frac{ \partial^2}{\partial t \partial s} p_{t,s}(x,y) \\ &\quad \leq 
\left[  \frac{3}{4} \frac{1}{(t-s)^2} + \frac{1}{4} \frac{(x-y)^4}{(t-s)^4}  + \frac{1}{4} \frac{y^2}{s^2}\frac{1}{t-s}+  \frac{1}{4}  \frac{(x-y)^2}{s(t-s)^2}\right]
|x-y|^{1+2\kappa}  p_{t,s}(x,y)
\\ &\quad =
\frac{1}{8\pi} \frac{1}{\sqrt{s(t-s)}}  \frac{1}{|t-s|^{3/2-\kappa}}  \exp\!\left(-\frac{(x-y)^2}{2(t-s)}\right)\exp\!\left(-\frac{y^2}{2s}\right) 
\\& \qquad \qquad \quad \qquad \qquad \qquad    \qquad \qquad   \times
\left[   3 \frac{|x-y|^{1+2\kappa} }{|t-s|^{1/2+\kappa}} +
 \frac{|x-y|^{5+2\kappa} }{|t-s|^{5/2+\kappa}}\right]   \\ & \qquad +
\frac{1}{8\pi} \frac{1}{\sqrt{s(t-s)}}  \frac{1}{s|t-s|^{1/2-\kappa}}  \exp\!\left(-\frac{(x-y)^2}{2(t-s)}\right)\exp\!\left(-\frac{y^2}{2s}\right) 
\\& \qquad \qquad \quad \qquad \qquad \qquad    \qquad \qquad   \times
\left[   
 \frac{y^2}{s} \frac{|x-y|^{1+2\kappa} }{|t-s|^{1/2+\kappa}} + \frac{|x-y|^{3+2\kappa} }{|t-s|^{3/2+\kappa}}\right].
\end{aligned}
\end{equation}
Setting $w^2=(x-y)^2/(t-s)$ respectively $w^2=y^2/s$ in \eqref{exp-est}, we obtain that for every $p \geq 0$ there exists a constant $c_{2p}>0$ such that for all $t>s>0$ and $x,y \in \mathbb{R}$ it holds
\begin{align*}
\frac{|x-y|^{2p}}{|t-s|^{p}}  \exp\!\left(- \frac{|x-y|^{2}}{2(t-s)} \right)  & \leq c_{2p}  \exp\! \left(- \frac{|x-y|^{2}}{4(t-s)} \right) , \\ 
\frac{y^{2p}}{s^p}   \exp\!\left(- \frac{y^{2}}{2s} \right) &   \leq c_{2p}   \exp\! \left(- \frac{y^{2}}{4s} \right) .
\end{align*} 
This and \eqref{help-densityp} establish that there exist constants $c_{1+2\kappa},c_{5+2\kappa},c_{2},c_{3+2\kappa}>0$ such that
\begin{align*}
 &-8 \pi \sqrt{s(t-s)}|x-y|^{1+2\kappa}  \frac{ \partial^2}{\partial t \partial s} p_{t,s}(x,y) \\ &\quad \leq 
\left[   \frac{3 c_{1+2\kappa}}{|t-s|^{3/2-\kappa}} \exp\!\left(-\frac{(x-y)^2}{4(t-s)}-\frac{y^2}{2s}\right)
+  \frac{c_{5+2\kappa}}{|t-s|^{3/2-\kappa}}   \exp\!\left(-\frac{(x-y)^2}{4(t-s)}-\frac{y^2}{2s}\right)
\right.\\&\qquad+\left.
 \frac{c_2 c_{1+2\kappa}}{s|t-s|^{1/2-\kappa}} \exp\!\left(-\frac{(x-y)^2}{4(t-s)}-\frac{y^2}{4s}\right)
 +\frac{c_{3+2\kappa}}{s|t-s|^{1/2-\kappa}}  \exp\!\left(-\frac{(x-y)^2}{4(t-s)}-\frac{y^2}{2s}\right) \right].
\end{align*}
Hence, there exists a constant $\densityconst>0$ such that
\begin{align*}
-|x-y|^{1+2\kappa}  \frac{ \partial^2}{\partial t \partial s} p_{t,s}(x,y) 
&\leq \densityconst  |t-s|^{\kappa -3/2} \cdot  \frac{1}{\sqrt{s(t-s)}} \exp \left(  - \frac{(x-y)^2}{4(t-s)} - \frac{y^2}{2s} \right) \\ & \quad + \densityconst  |t-s|^{\kappa -1/2} s^{-1} \cdot  \frac{1}{\sqrt{s(t-s)}} \exp \left(  - \frac{(x-y)^2}{4(t-s)} - \frac{y^2}{4s} \right).
\end{align*}
Using that the exponential terms above are bounded by one, we have
\begin{align*}
-|x-y|^{1+2\kappa}  \frac{ \partial^2}{\partial t \partial s} p_{t,s}(x,y) 
&\leq \densityconst  |t-s|^{\kappa -2} s^{-1/2} + \densityconst  |t-s|^{\kappa -1} s^{-3/2}.
\end{align*}
\end{proof}

\subsection{Stochastic preliminaries}

We denote by $\phi_{\vartheta}$ the function $\phi_{\vartheta}(x)=\frac{1}{\sqrt{2 \pi \vartheta}}\exp\big(-\frac{x^2}{2\vartheta}\big)$, $x\in \mathbb{R}$, $\vartheta>0$.
We require the following auxiliary result.

\begin{lemma} \label{lem_1} 
Let $\kappa \in (0,1)$, and let $f: \mathbb{R} \rightarrow \mathbb{R}$ be measurable  such that $|f|_{\kappa}< \infty.$
Then there exists a constant $c_{\kappa}>0$ such that for all $0 < s \leq t \leq T$ 
we have
$$  \E \!\left[  | f(W_t +\xi) - f(W_{s}+\xi)|^2  \right] \leq c_{\kappa} |f|_{\kappa}^2 \cdot (t-s)^{\kappa} s^{-1/2}. $$
\end{lemma}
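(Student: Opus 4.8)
The plan is to express the second moment as a double integral against a Sobolev-Slobodeckij weight and then bound the relevant transition density, using the independence of $\xi$ and $W$ together with the explicit form of the Brownian density. First I would condition on $\xi$ and use that $\xi$ is $\mathcal F_0$-measurable (hence independent of $W$): since the Sobolev-Slobodeckij semi-norm is shift invariant, it suffices to prove the bound for $\xi=0$, i.e.\ for $\E[|f(W_t)-f(W_s)|^2]$, and then average over the law of $\xi$. So from now on assume $\xi = 0$.

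\textbf{Reduction to a density estimate.} Write, using the law of the pair $(W_s,W_t)$,
\begin{align*}
\E\!\left[|f(W_t)-f(W_s)|^2\right]
&= \int_{\R}\int_{\R} |f(x)-f(y)|^2\, q_{t,s}(x,y)\, dx\, dy,
\end{align*}
where $q_{t,s}$ is the joint density of $(W_t,W_s)$. Since $W_s \sim \mathcal N(0,s)$ and $W_t - W_s \sim \mathcal N(0,t-s)$ independently, one has $q_{t,s}(x,y) = \phi_{t-s}(x-y)\phi_s(y)$, which is exactly $2\pi\sqrt{s(t-s)}$ times nothing — in fact $q_{t,s}(x,y) = p_{t,s}(x,y)$ with the notation of Lemma \ref{lem_pab} (the normalizing constants match). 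Hence
\begin{align*}
\E\!\left[|f(W_t)-f(W_s)|^2\right]
= \int_{\R}\int_{\R} \frac{|f(x)-f(y)|^2}{|x-y|^{1+2\kappa}}\, \cdot |x-y|^{1+2\kappa}\, p_{t,s}(x,y)\, dx\, dy.
\end{align*}
The goal is therefore to show $|x-y|^{1+2\kappa} p_{t,s}(x,y) \leq c_\kappa\,(t-s)^\kappa s^{-1/2}$ for all $x,y$, because then the integral is bounded by $c_\kappa (t-s)^\kappa s^{-1/2} |f|_\kappa^2$, which is the claim.

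\textbf{The pointwise density bound.} For the bound $|x-y|^{1+2\kappa} p_{t,s}(x,y) \le c_\kappa (t-s)^\kappa s^{-1/2}$, I would write $p_{t,s}(x,y) = \frac{1}{2\pi}(s(t-s))^{-1/2} \exp(-\tfrac{(x-y)^2}{2(t-s)})\exp(-\tfrac{y^2}{2s})$, bound $\exp(-y^2/(2s)) \le 1$, and use the estimate \eqref{exp-est}: setting $w^2 = (x-y)^2/(t-s)$ and $p = (1+2\kappa)/2$,
\begin{align*}
\frac{|x-y|^{1+2\kappa}}{(t-s)^{(1+2\kappa)/2}}\exp\!\left(-\frac{(x-y)^2}{2(t-s)}\right) \le c_{1+2\kappa}\,\exp\!\left(-\frac{(x-y)^2}{4(t-s)}\right) \le c_{1+2\kappa},
\end{align*}
so that $|x-y|^{1+2\kappa} p_{t,s}(x,y) \le \frac{c_{1+2\kappa}}{2\pi}\, (t-s)^{(1+2\kappa)/2} (s(t-s))^{-1/2} = \frac{c_{1+2\kappa}}{2\pi}\, (t-s)^{\kappa} s^{-1/2}$. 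Setting $c_\kappa := c_{1+2\kappa}/(2\pi)$ (possibly enlarging it when averaging over $\xi$ — no enlargement needed since the bound is uniform in the shift) finishes the proof.

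\textbf{Main obstacle.} There is no real obstacle here: the only mild subtlety is matching the normalization of the joint density $q_{t,s}$ of $(W_t, W_s)$ with $p_{t,s}$ as defined in \eqref{pts}, and making sure the change of variable (splitting $W_t = W_s + (W_t - W_s)$) is done with independent increments; the estimate \eqref{exp-est} does all the analytic work. One should also note the degenerate case $s = t$, where $(t-s)^\kappa = 0$ and the left-hand side is zero, so the inequality holds trivially; for $0 < s < t$ the argument above applies. The factor $s^{-1/2}$ on the right is harmless as $s$ is bounded below in all later applications, and it appears naturally from the $\mathcal N(0,s)$ density of $W_s$.
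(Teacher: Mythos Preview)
Your proof is correct and follows essentially the same approach as the paper: write the second moment as a double integral against the joint Gaussian density, factor out the Sobolev--Slobodeckij weight $|x-y|^{-1-2\kappa}$, and control the remaining factor $|x-y|^{1+2\kappa}p_{t,s}(x,y)$ pointwise using \eqref{exp-est} together with $\phi_s(y)\le (2\pi s)^{-1/2}$. The only cosmetic difference is that the paper parametrizes the double integral by $(W_t-W_s,W_s)$ rather than $(W_t,W_s)$, and handles $\xi$ by conditioning on $\mathcal F_0$ and invoking shift invariance at the end rather than reducing to $\xi=0$ at the start; the analytic content is identical.
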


\begin{proof}
Clearly, we have
$$ \E \!\left[ |  f(W_t+ \xi) - f(W_{s}+\xi)|^2 \right] = \E \left[  \E \!\left[ |  f(W_t+ \xi) - f(W_{s}+\xi)|^2  \big{|} \mathcal{F}_0 \right] \right]. $$
Since $W$ is independent of $\mathcal{F}_0$, we obtain
\begin{align*}
&   \E \!\left[ |  f(W_t+ \xi) - f(W_{s}+\xi)|^2 \big{|} \mathcal{F}_0 \right] = \int_{\R} \int_{\R}  (f(x+y+\xi)-f(y+\xi))^2  \phi_{t-s}(x) \phi_{s}(y) dy dx.
\end{align*}
Now write
\begin{align*}
 &  \int_{\R} \int_{\R} (f(x+y+ \xi)-f(y+\xi))^2\phi_{t-s}(x) \phi_{s}(y) dy dx 
    \\ &  \quad = (t-s)^{1/2 + \kappa} \int_{\R} \int_{\R} \frac{(f(x+y+\xi)-f(y+\xi))^2}{|x|^{1+ 2 \kappa}} \frac{|x|^{1+2 \kappa}}{(t-s)^{1/2 + \kappa}} \phi_{t-s}(x) \phi_{s}(y) dy dx .
   \end{align*}
Next we use  \eqref{exp-est} with $w^2=x^2/(t-s)$.
This yields for all $x\in\R$ the estimate
\begin{align*}
\frac{|x|^{1+2 \kappa}}{(t-s)^{1/2 + \kappa}}
\phi_{t-s}(x)
&= 
\frac{|x|^{1+2 \kappa}}{(t-s)^{1/2 + \kappa}}
\frac{1}{\sqrt{2 \pi(t-s)}} \exp\! \left(- \frac{x^2}{2(t-s)} \right)
\\ &  \le 
c_{1+2\kappa} \frac{1}{\sqrt{2 \pi(t-s)}}   \exp\! \left(- \frac{x^2}{4(t-s)}\right)
 \leq 
c_{1+2\kappa} \frac{1}{\sqrt{2 \pi(t-s)}}  .
\end{align*} 
Since moreover
$ \phi_{s}(y) \leq \frac{1}{\sqrt{2 \pi s}}$, Corollary \ref{sobolev-2} yields
\begin{align*}
\E\!\left[  |  f(W_t + \xi )) - f(W_{s}+\xi)|^2 \right] 
&\leq
\frac{c_{1+2\kappa}}{2\pi } (t-s)^{\kappa}s^{-1/2} \int_{\R} \int_{\R} \E \left[  \frac{(f(z+\xi)-f(y+\xi))^2}{|z-y|^{1+2 \kappa}} \right]
dy dz 
\\&=
\frac{c_{1+2\kappa}}{2\pi } (t-s)^{\kappa}s^{-1/2} |f|_{\kappa}^2,
\end{align*}
which is the desired statement.
\end{proof}

The following Lemma deals with an  integration problem  seemingly similar to $\mathcal{W}^{(\pi_n)}$. However, the transformation of $W+\xi$ has significantly more smoothness here.

\begin{lemma}\label{step-decomp_2}
Let $\kappa \in (0,1)$ and $\psi_3,\psi_4\colon\R\to\R$ be bounded and measurable functions. Moreover, let $\psi_3$ be absolutely continuous with bounded Lebesgue density $\psi'_3\colon\R\to\R$ that satisfies $|\psi_3'|_{\kappa} < \infty$.
Then, there exists a constant $c^{(qs)}_{\psi_3, \psi_4,\kappa,T}>0$ such that 
\begin{align*}  & \sup_{t \in [0,T]} \E\!\left[
\left| \int_0^{t}     \left( \psi_3(W_{s}+\xi)  -  \psi_3(W_\es+\xi) \right)  \psi_4(W_{\es}+\xi) ds \right|^2\right]  \\ & \qquad  \qquad \qquad  \leq    c^{(qs)}_{\psi_3, \psi_4, \kappa, T}  \left( 1+ \sum_{k=1}^{n-1} t_k^{-1/2}(t_{k+1}-t_k)\right)\cdot \| \pi_n \|^{1+\kappa}. 
\end{align*}
\end{lemma}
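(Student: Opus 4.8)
The plan is to expand the square, separate diagonal from off-diagonal contributions, and treat the off-diagonal part using the mixed second derivative of the Gaussian density bounded in Lemma \ref{lem_pab}; the diagonal part is handled by Lemma \ref{lem_1}. First I would write the integral over $[0,t]$ as $\sum_{k} \int_{t_k \wedge t}^{t_{k+1}\wedge t}(\psi_3(W_s+\xi)-\psi_3(W_{t_k}+\xi))\psi_4(W_{t_k}+\xi)\,ds$ and, squaring, obtain a double sum over indices $k,m$. For the diagonal terms $k=m$, bound $|\psi_4|\le\|\psi_4\|_\infty$, apply Cauchy-Schwarz in $s$, and use Lemma \ref{lem_1} with $f=\psi_3$ (which has $|\psi_3|_\kappa<\infty$ because $\psi_3$ is bounded and $\psi_3'$ is bounded, hence $\psi_3$ is Lipschitz, hence locally of any Sobolev-Slobodeckij regularity; more carefully, one reduces to the behaviour near the diagonal where Lipschitzness gives $|x-y|^2$ and the tails are controlled by boundedness exactly as in Lemma \ref{sobolev}) to get $\E[|\psi_3(W_s+\xi)-\psi_3(W_{t_k}+\xi)|^2]\le c\,(s-t_k)^\kappa t_k^{-1/2}$; integrating $(s-t_k)^\kappa$ over $[t_k,t_{k+1}]$ costs $(t_{k+1}-t_k)^{1+\kappa}\le\|\pi_n\|^{\kappa}(t_{k+1}-t_k)$, and summing against $t_k^{-1/2}$ produces precisely the factor $\big(1+\sum_{k\ge1}t_k^{-1/2}(t_{k+1}-t_k)\big)\|\pi_n\|^{1+\kappa}$ up to the $k=0$ term, which is handled separately (on $[0,t_1]$ one bounds the increment crudely by $2\|\psi_3\|_\infty$ and the time integral by $t_1\le\|\pi_n\|$, or uses that $\psi_3$ Lipschitz gives an even better bound there).

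For the off-diagonal terms, assume $m<k$ (the other case is symmetric and gives the factor $2$). Conditioning on $\mathcal F_0$ and using independence of $W$ from $\xi$, the key object is
\begin{align*}
\E\big[(\psi_3(W_s+\xi)-\psi_3(W_{t_k}+\xi))\psi_4(W_{t_k}+\xi)(\psi_3(W_r+\xi)-\psi_3(W_{t_m}+\xi))\psi_4(W_{t_m}+\xi)\big]
\end{align*}
for $r\in[t_m,t_{m+1}]$, $s\in[t_k,t_{k+1}]$ with $t_m<t_{m+1}\le t_k<s$. Here I would freeze $W_{t_m}$ and $W_{t_k}$, write the increment $\psi_3(W_s+\xi)-\psi_3(W_{t_k}+\xi)=\int_{t_k}^s \psi_3'(W_u+\xi)\,dW_u + \tfrac12\int_{t_k}^s\psi_3''(\cdot)\,du$ — but $\psi_3''$ need not exist, so instead the cleaner route is to use the heat-kernel representation directly: the contribution of the pair is an integral against a product of Gaussian densities, and the two increments in the $r$- and $s$-variables, being differences $f(W_s)-f(W_{t_k})$ with $f=\psi_3$, are what let us integrate by parts. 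Concretely, after conditioning, the cross term equals a four-fold Gaussian integral in the increments $W_r-W_{t_m}$, $W_s-W_{t_k}$ and their anchors; differentiating the density in the endpoints $s$ and $r$ and integrating by parts transfers the smoothness onto $p_{t,s}$, and then the bound \eqref{est-pab} of Lemma \ref{lem_pab} — applied with $\psi_3'$ in place of the Sobolev function, using $|\psi_3'|_\kappa<\infty$ — yields a bound of order $(s-t_k)^{?}(r-t_m)^{?}$ times $|s-r|^{\kappa-2}(\cdot)^{-1/2}+|s-r|^{\kappa-1}(\cdot)^{-3/2}$-type factors. Summing the resulting estimate over the off-diagonal indices, the powers of $\|\pi_n\|$ combine to $\|\pi_n\|^{1+\kappa}$ and the negative powers of the time variables sum (via comparison with an integral) to a constant times $\big(1+\sum_k t_k^{-1/2}(t_{k+1}-t_k)\big)$.

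The main obstacle is the off-diagonal estimate: one must extract a gain of a full power $1+\kappa$ of the mesh from a product of two first-order increments that are individually only $\kappa/2$-regular in $L^2$, which forces the use of the correlation decay encoded in the mixed derivative $\partial^2_{t,s}p_{t,s}$ rather than a naive Cauchy-Schwarz (which would only give $\kappa$). Getting the bookkeeping of the exponents right — so that after integrating $(s-t_k)$ over $[t_k,t_{k+1}]$, $(r-t_m)$ over $[t_m,t_{m+1}]$, and $|s-r|^{\kappa-2}$ or $|s-r|^{\kappa-1}$ across the gap, and then summing over $m<k$, everything collapses to $\|\pi_n\|^{1+\kappa}$ against the weight $1+\sum_k t_k^{-1/2}(t_{k+1}-t_k)$ — is the delicate part; the singularity $|s-r|^{\kappa-2}$ is not integrable, so one genuinely needs the integration-by-parts identity \eqref{p-deriv} (not just the pointwise bound \eqref{est-pab}) together with the factor $|x-y|^{1+2\kappa}$ coming from the Sobolev-Slobodeckij seminorm of $\psi_3'$ to render the estimate finite, exactly as set up in Lemma \ref{lem_pab}. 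I expect the remaining steps — the diagonal bound, the separate treatment of the first block $[0,t_1]$, and the final summation — to be routine given Lemmas \ref{lem_pab} and \ref{lem_1}.
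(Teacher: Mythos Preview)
Your plan mixes up two distinct techniques from the paper and, as written, does not close. The route via expanding the square and invoking Lemma~\ref{lem_pab} for the off-diagonal terms is what the paper uses for the term $\mathcal W_1$ in Theorem~\ref{main_2}, where the integrand is a bare increment $f(W_s+\xi)-f(W_{\underline s}+\xi)$ with $|f|_\kappa<\infty$. That argument relies crucially on the identity
\[
\int_{\R}\int_{\R} f(x+\xi)^2 \big(p_{s,t}-p_{t_k,t}-p_{s,t_\ell}+p_{t_k,t_\ell}\big)(x,y)\,dx\,dy=0,
\]
which turns $f(x)f(y)$ into $-\tfrac12|f(x)-f(y)|^2$ and makes the Sobolev seminorm $|f|_\kappa$ appear. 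In your setting the integrand carries the extra factors $\psi_4(W_{t_k}+\xi)\psi_4(W_{t_m}+\xi)$, which depend on the anchor points and destroy this symmetrization; you give no substitute. You also never explain how $\psi_3'$ (as opposed to $\psi_3$) would enter the off-diagonal estimate --- and $|\psi_3|_\kappa$ is \emph{not} available: $\psi_3$ is merely bounded and Lipschitz, not in $L^2(\R)$, so the tail part of the double integral diverges and the argument you borrow from Lemma~\ref{sobolev} breaks down (there the $L^2$-bound on $b$ is essential). The same issue invalidates your diagonal step via Lemma~\ref{lem_1} with $f=\psi_3$, though that part is easily repaired using Lipschitzness directly.

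The paper's proof takes a different and shorter route that you are missing: apply the fundamental theorem of calculus to write
\[
\psi_3(W_s+\xi)-\psi_3(W_{\underline s}+\xi)=(W_s-W_{\underline s})\int_0^1\psi_3'\big(W_{\underline s}+\xi+\gamma(W_s-W_{\underline s})\big)\,d\gamma,
\]
then split off the leading term $(\psi_3'\psi_4)(W_{\underline s}+\xi)(W_s-W_{\underline s})$. This leading term is handled by the martingale orthogonality of Lemma~\ref{step-decomp} (with $\psi_1=\psi_3'\psi_4$, $\psi_2=1$), giving $\|\pi_n\|^2$ outright --- no off-diagonal density analysis needed. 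The remainder involves $\psi_3'(W_{\underline s}+\xi+\gamma(W_s-W_{\underline s}))-\psi_3'(W_{\underline s}+\xi)$, and \emph{here} the Sobolev regularity $|\psi_3'|_\kappa$ enters, via a pointwise Gaussian estimate in the spirit of Lemma~\ref{lem_1} (applied to $\psi_3'$, not $\psi_3$), together with the extra factor $(W_s-W_{\underline s})^2$ that supplies the additional power of the mesh. This is how one legitimately trades the assumption on $\psi_3'$ for the exponent $1+\kappa$.
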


\begin{proof}
The fundamental theorem of Lebesgue-integral calculus implies for all $t\in[0,T]$ that
\begin{equation}\label{W21W22}
\begin{aligned}
& \E\!\left[
 \left| \int_0^{t}     \left( \psi_3(W_{s}+\xi)  -  \psi_3(W_\es+\xi) \right)  \psi_4(W_{\es}+\xi) ds \right|^2\right] \\ 
& = \E \left[ \left| \int_0^{t}     \int_0^1  \left( W_s -W_{\es} \right) \psi'_3\left( \xi + W_{\es} + \gamma (W_s - W_{\es}) \right)  \psi_4(W_{\es}+\xi)  d \gamma ds \right|^2\right]
\\ & \leq 2 \left(\mathcal{E}_{1}{ (t)} +\mathcal{E}_{2}{ (t)} \right),
\end{aligned}
\end{equation}
where
\begin{align*}
\mathcal{E}_{1}{(t)}&=
 \E \left[ \left| \int_0^{t}   \int_0^1   \left[\psi'_3(W_{\es}+\xi + \gamma(W_s-W_{\es}))
 - \psi'_3(W_{\es}+\xi)
  \right]\left( W_s -W_{\es} \right)   \psi_4(W_{\es}+\xi)    d \gamma ds \right|^2\right],
  \\
\mathcal{E}_{2}{(t)}&
 = \E \left[ \left| \int_0^{{t}}    (\psi'_3 \psi_4)(W_{\es}+\xi)\left( W_s -W_{\es} \right)   ds \right|^2\right]  .
\end{align*}
For the second term, we  apply Lemma \ref{step-decomp} with $\psi_1=\psi_3'\psi_4$, $\psi_2=1$ and obtain 
\begin{equation}\label{est-W22}
\mathcal{E}_{2} {(t)} \leq
\frac{t }{2} \|\psi'_3 \psi_4\|_\infty^2 \|  \pi_n \|^2. 
\end{equation}
For $\mathcal{E}_1$ the Cauchy-Schwarz inequality gives
\begin{align*}
\mathcal{E}_{1}{ (t)} \le {t} \int_0^{ t}    \int_0^1  \E\!\left[ \left| \left[ \psi_3'(W_{\es}+\xi + \gamma(W_s-W_{\es}))
 - \psi_3'(W_{\es}+\xi)
  \right]\left( W_s -W_{\es} \right)   \psi_4 (W_{\es}+\xi)   \right|^2 \right] d \gamma ds.
\end{align*}
Splitting the time integral yields
\begin{equation}\label{est-W21-1}
\mathcal{E}_{1}{(t)}\leq 4 {t} \| \psi'_3 \|_{\infty}^2 \| \psi_4 \|_{\infty}^2 t_1^2  +
 {t }  \widetilde{\mathcal{E}}_{1}{(t)} \cdot\1_{[t_1,T]}(t)
 \le
 4 {t}\| \psi'_3 \|_{\infty}^2 \| \psi_4 \|_{\infty}^2  \|  \pi_n \|^2  +
 {t}  \widetilde{\mathcal{E}}_{1}{(t)}\cdot\1_{[t_1,T]}(t)
 ,
\end{equation}
where
$$ \widetilde{\mathcal{E}}_{1}{(t)}=
 \int_{t_1}^{t}   \int_0^1  \E\!\left[ \Big| \left[ \psi_3'(W_{\es}+\xi + \gamma(W_s-W_{\es}))
 - \psi_3'(W_{\es}+\xi)
  \right]\left( W_s -W_{\es} \right)   \psi_4 (W_{\es}+\xi)   \Big|^2\right]d\gamma ds. $$
Now write
\begin{align*}
  \widetilde{\mathcal{E}}_{1}{(t)}&=  \int_{t_1}^{t}  \int_0^1\E\!\left[  \E\! \left[ \Big| \left[ \psi_3'(W_{\es}+\xi + \gamma(W_s-W_{\es}))
 - \psi_3'(W_{\es}+\xi)
  \right]\left( W_s -W_{\es} \right)   \psi_4(W_{\es}+\xi)   \Big|^2 \Big{|} \mathcal{F}_0 \right] \right]d \gamma ds \\
& \,\, = \E\!\Big [  \int_{\mathbb{R}}  \int_{\mathbb{R}}    \int_{t_1}^{t}   \int_0^1   [\psi_3'(y+ \xi+x) 
 - \psi_3'(y+\xi)]^2 \left( \frac{x}{\gamma} \right)^2 (\psi_4( y+\xi ))^2 
   \\&\ \qquad \qquad \qquad \qquad \qquad \qquad \qquad   \qquad \qquad \qquad \qquad \qquad 
  \times \phi_{\gamma^2(s-\es)}(x) \phi_{\es}(y)   d \gamma ds   dy dx\Big].
\end{align*}
With $\phi_{ \es}(y) \leq \frac{1}{ \sqrt{2\pi \es}}$ for all $y \in \mathbb{R}$, we obtain
\begin{align*}
 \widetilde{\mathcal{E}}_{1}{(t)} &  \leq  \frac{\| \psi_4 \|_{\infty}^2}{\sqrt{2 \pi}} \E\!\left[  \int_{\mathbb{R}}  \int_{\mathbb{R}}  \int_{t_1}^{t}  \int_0^1    \es ^{-1/2} [\psi_3'(y+ \xi+ x) 
 - \psi_3'(\xi +y)]^2  \left( \frac{x}{\gamma} \right)^2      \phi_{\gamma^2(s-\es)}(x)  d \gamma ds  dy dx\right]
\\ & =   \frac{\| \psi_4 \|_{\infty}^2}{\sqrt{2 \pi}}   \E\!\left[   \int_{\mathbb{R}}  \int_{\mathbb{R}}  \int_{t_1}^{  t}  \int_0^1    \gamma^{1+2\kappa} \es^{-1/2} (s-\es)^{3/2 + \kappa} \frac{[\psi_3'(y+ \xi+ x) 
 - \psi_3'(\xi+y)]^2}{|x|^{1+2\kappa}} 
 \right.\\ & \left.\qquad \qquad \qquad \qquad\qquad \qquad  \qquad \qquad \qquad \qquad \times  \frac{|x|^{3+ 2 \kappa}}{  (\gamma^2(s-\es))^{3/2 + \kappa}}  \phi_{\gamma^2(s-\es)}(x)      d \gamma ds dy dx\right].
\end{align*}
Setting $w^2=x^2/(\gamma^2(s-\es))$ in \eqref{exp-est} we get that  for all $x \in \mathbb{R}, s \in (0,T], \gamma \in (0,1]$
there exists a constant $c_{3+2\kappa}>0$ such that 
\begin{align} \label{exp_est-W21}
 \frac{|x|^{3+ 2 \kappa}}{  (\gamma^2(s-\es))^{3/2 + \kappa}} \phi_{\gamma^2(s-\es)}(x)  
 &= \frac{|x|^{3+ 2 \kappa}}{  (\gamma^2(s-\es))^{3/2 + \kappa}}  \exp\!\left( -\frac{x^2}{2\gamma^2(s-\es)} \right)\frac{1}{\sqrt{2\pi\gamma^2(s-\es)}} \nonumber \\
  &\le  c_{3+2\kappa} \exp\!\left( -\frac{x^2}{4\gamma^2(s-\es)} \right) \frac{1}{\sqrt{2\pi\gamma^2(s-\es)}}
\\ & \leq c_{3+2\kappa}   \frac{1}{\sqrt{2\pi\gamma^2(s-\es)}}.
\end{align}  
Therefore,
\begin{align*}
 \widetilde{\mathcal{E}}_{1}{ (t)}  & \leq 
 \frac{c_{3+2\kappa} \| \psi_4 \|_{\infty}^2}{2 \pi}     \int_{\mathbb{R}}  \int_{\mathbb{R}}  \int_{t_1}^{t}    \int_0^1    \gamma^{2\kappa} \es^{-1/2} (s-\es)^{1 + \kappa} \frac{\E \, [|\psi'_3(y+ x + \xi ) 
 -  \psi_3'(y + \xi)|^2]}{|x|^{1+2\kappa}} d \gamma ds  dy dx \\
& =  \frac{c_{3+2\kappa} \| \psi_4 \|_{\infty}^2}{2 \pi}     \int_{t_1}^{t}    \int_0^1    \gamma^{2\kappa} \es^{-1/2} (s-\es)^{1 + \kappa}
 \E\!\left[ | \psi'_3(\cdot+\xi)|_{\kappa}^2  \right]
d \gamma ds .
\end{align*}
Corollary \ref{sobolev-2} gives
$$  \E\!\left[ | \psi'_3(\cdot+\xi)|_{\kappa}^2  \right] = |\psi'_3 |_{\kappa}^2, $$
and hence
we obtain for all $t \in [0,T]$,
\begin{align} \label{est-W21-2} 
\widetilde{\mathcal{E}}_{1}{ (t)}  \nonumber
 & \leq    \frac{c_{3+2\kappa} \| \psi_4 \|_{\infty}^2 | \psi'_3 |_{\kappa}^2 }{2 \pi}  \int_{t_1}^{t}   \int_0^1    \gamma^{2\kappa} \es^{-1/2} (s-\es)^{1 + \kappa} d \gamma ds
\\ & \leq  \frac{c_{3+2\kappa} \| \psi_4 \|_{\infty}^2 | \psi'_3 |_{\kappa}^2 }{2 \pi (1+ 2 \kappa)} \| \pi_n \|^{1+\kappa} \int_{t_1}^{T}  \es^{-1/2} ds.
\end{align} 
Combining \eqref{W21W22}, \eqref{est-W22}, \eqref{est-W21-1}, and \eqref{est-W21-2} concludes the proof.
\end{proof}

\subsection{Error analysis of the quadrature problem}

Now we will consider two specific discretizations:~an equidistant discretization $\pi_n^{equi}$ given by
\begin{align}\label{equidisc} t_k^{equi}= T \frac{k}{n}, \qquad  k=0, \ldots, n,\end{align}
and the non-equidistant discretization $\pi_n^{*}$ given by
\begin{align}\label{net-ga}
 t_k^*=T  \left(\frac{k}{n}\right)^{\!2}, \quad k=0, \ldots, n. \end{align}
Clearly, we have
$$  t_{k+1}^*-t_k^* =   \frac{2k+1}{n} \cdot \frac{T}{n},   \qquad k=0, \ldots, n-1,$$
and
\begin{align}\label{Delta-est} \|  \pi_n^* \| = \max_{k=0,\ldots, n-1} |t_{k+1}^*-t_k^*|= \left( 2-  \frac{1}{n} \right)  \cdot \frac{T}{n}\le\frac{2T}{n}. \end{align}
Moreover,  we have:
\begin{lemma}\label{sum-int}
Let $p\in(0,1)$. For $\pi_n^{equi}$ and $\pi_n^*$ we have 
\begin{align*}\sum_{k=1}^{n-1}  t_k^{-p} (t_{k+1}-t_k)
\leq \frac{3}{2}\frac{T^{1-p}}{1-p}.
  \end{align*}
\end{lemma}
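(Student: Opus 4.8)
The plan is to recognize both sums as left-endpoint Riemann-type sums for $\int_0^T t^{-p}\,dt=\frac{T^{1-p}}{1-p}$ and to exploit the monotonicity of $t\mapsto t^{-p}$ together with the very explicit structure of the two nets. The case $n=1$ is trivial, since then the sum is empty; so assume $n\ge 2$. First I would treat the two discretizations separately, because for $\pi_n^{equi}$ the equal spacing gives an almost immediate estimate, whereas for $\pi_n^*$ one has to carry out an honest computation to keep the constant as small as $3/2$.

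For $\pi_n^{equi}$ one has $t_{k+1}^{equi}-t_k^{equi}=t_k^{equi}-t_{k-1}^{equi}=T/n$ for every $k\ge 1$, and since $t\mapsto t^{-p}$ is decreasing we have $(t_k^{equi})^{-p}\le t^{-p}$ for $t\in[t_{k-1}^{equi},t_k^{equi}]$. Therefore
\[
(t_k^{equi})^{-p}\bigl(t_{k+1}^{equi}-t_k^{equi}\bigr)=(t_k^{equi})^{-p}\bigl(t_k^{equi}-t_{k-1}^{equi}\bigr)\le\int_{t_{k-1}^{equi}}^{t_k^{equi}}t^{-p}\,dt,
\]
and summing over $k=1,\dots,n-1$ collapses the integration range to $[0,t_{n-1}^{equi}]$, so the sum is bounded by $\int_0^{t_{n-1}^{equi}}t^{-p}\,dt\le\frac{T^{1-p}}{1-p}\le\frac32\frac{T^{1-p}}{1-p}$ (here one uses $p<1$ so that $t^{-p}$ is integrable near $0$).

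For $\pi_n^{*}$ the steps are no longer equal, so I would compute directly: from $t_k^*=T(k/n)^2$ and $t_{k+1}^*-t_k^*=(2k+1)T/n^2$ one gets $(t_k^*)^{-p}(t_{k+1}^*-t_k^*)=T^{1-p}n^{2p-2}\bigl(2k^{1-2p}+k^{-2p}\bigr)$. Since $k^{-2p}\le k^{1-2p}$ for $k\ge 1$, the problem reduces to showing $\sum_{k=1}^{n-1}k^{1-2p}\le\frac{n^{2-2p}}{2(1-p)}$, because then $\sum_{k=1}^{n-1}\bigl(2k^{1-2p}+k^{-2p}\bigr)\le 3\sum_{k=1}^{n-1}k^{1-2p}\le\frac{3\,n^{2-2p}}{2(1-p)}$, and multiplying by $T^{1-p}n^{2p-2}$ gives exactly $\frac32\frac{T^{1-p}}{1-p}$. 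The bound on $\sum_{k=1}^{n-1}k^{1-2p}$ follows from an integral comparison: if $p\le 1/2$ the map $x\mapsto x^{1-2p}$ is nondecreasing, so $k^{1-2p}\le\int_k^{k+1}x^{1-2p}\,dx$ and the sum is at most $\int_1^n x^{1-2p}\,dx\le\int_0^n x^{1-2p}\,dx=\frac{n^{2-2p}}{2-2p}$; if $p\ge 1/2$ the map is nonincreasing and (because $p<1$, so $1-2p>-1$) the relevant integrals converge, so $k^{1-2p}\le\int_{k-1}^k x^{1-2p}\,dx$ and the sum is at most $\int_0^{n-1}x^{1-2p}\,dx\le\frac{n^{2-2p}}{2-2p}$; in both cases $\frac{n^{2-2p}}{2-2p}=\frac{n^{2-2p}}{2(1-p)}$, and the value $p=1/2$ is covered by either branch since the integrand is then constant.

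The only slightly delicate point is the non-equidistant case: a crude estimate such as $t_{k+1}^*-t_k^*=\frac{2k+1}{2k-1}(t_k^*-t_{k-1}^*)\le 3(t_k^*-t_{k-1}^*)$, which would mimic the equidistant argument, only yields the constant $3$ rather than $3/2$, so one genuinely has to perform the explicit summation above and split according to whether $p\le 1/2$ or $p\ge 1/2$ in order to control the monotonicity of $x\mapsto x^{1-2p}$. Everything else is routine.
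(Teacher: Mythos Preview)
Your proof is correct and follows essentially the same approach as the paper's own proof: both treat the two nets separately, compare the sums to $\int_0^T t^{-p}\,dt$ (respectively $\int_0^1 s^{1-2p}\,ds$ after the same $2k+1\le 3k$ reduction), and split according to $p\lessgtr 1/2$ to handle the monotonicity of the integrand. The only cosmetic differences are that the paper writes the Riemann-sum comparison on $[0,1]$ rather than on $[0,n]$, and in the equidistant case does not spell out the interval shift $[t_k,t_{k+1}]\to[t_{k-1},t_k]$ as explicitly as you do.
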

\begin{proof}
Consider first $\pi_n^{equi}$.
Using Riemann sums
we obtain
\begin{align*}
\sum_{k=1}^{n-1}  t_k^{-p}   (t_{k+1}-t_k)
= T^{1-p}  \sum_{k=1}^{n-1}  \left( \frac{k}{n}\right)^{\!-p}   \frac{1}{n}
\le
T^{1-p}\int_0^{1} \left(\frac{1}{s}\right)^{p} ds
=\frac{T^{1-p}}{1-p}.
  \end{align*}

 For $\pi_n^{*}$ we have that
\begin{align*}
 \sum_{k=1}^{n-1}  t_k^{-p}   (t_{k+1}-t_k)
& = T^{1-p}  \sum_{k=1}^{n-1}  \left( \frac{k}{n}\right)^{-2p}   \frac{2k+1}{n^2}
\le 3 T^{1-p}  \sum_{k=1}^{n-1}  \left( \frac{k}{n}\right)^{1-2p}   \frac{1}{n}
\\ & \leq 3 T^{1-p}
\int_0^{1} \left(\frac{1}{s}\right)^{2p-1} ds
= \frac{3}{2}\frac{T^{1-p}}{1-p}.
  \end{align*} 
Note the case distinction in $p<1/2$, $p=1/2$, and $p>1/2$ for the Riemann sums.
\end{proof}

Our main result is:

\begin{theorem}\label{main_2}  Let Assumptions \ref{ass} and \ref{ass_b}    hold.  Then there exist  constants $C^{(Q),equi}_{b,T,\kappa}>0$ and $C^{(Q),*}_{b,T,\kappa}>0$  such that 
$$ \sup_{t \in [0,T]} \mathcal{W}^{(\pi_n^{equi})}_{t} \leq   C^{(Q),equi}_{b,T,\kappa} \cdot \left(   \frac{1}{n^{1+\kappa}} + \frac{1}{n^{3/2}} \right)$$
and  
$$ \sup_{t \in [0,T]}  \mathcal{W}^{(\pi_n^{*})}_{t} \leq   C^{(Q),*}_{b,T,\kappa} \cdot \frac{1+\log(n)}{n^{1+\kappa}}. $$
\end{theorem}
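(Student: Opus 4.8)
The plan is to expand the square defining $\mathcal{W}^{(\pi_n)}_t$ into a double time-integral, insert the Gaussian density factorization from Lemma \ref{lem_pab}, and then integrate by parts in the two time variables to transfer derivatives onto the kernel $p_{t,s}$ where the bound \eqref{est-pab} applies. Concretely, writing $g = \varphi' b$ (so $|g|_\kappa < \infty$ by Lemma \ref{sobolev}) and abbreviating $G_s = g(W_s+\xi) - g(W_{\underline s}+\xi)$, I would first note
\begin{align*}
\mathcal{W}^{(\pi_n)}_t = \E\!\left[\Big|\int_0^t G_s\, ds\Big|^2\right] = 2\int_0^t\!\!\int_0^s \E[G_r G_s]\, dr\, ds.
\end{align*}
The off-diagonal contributions (where $\underline r$ and $\underline s$ lie in different subintervals) are the main term; here $\E[G_r G_s]$ is a difference of four quantities of the form $\E[g(W_a+\xi)g(W_b+\xi)]$, and conditioning on $\mathcal{F}_0$ and using independence of $W$ expresses each as an integral of $g(x+\xi)g(y+\xi)$ against a Gaussian density in $(x,y)$ whose parameters depend on the pair of times. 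Taking mixed differences in the two time arguments produces exactly the mixed second derivative $\partial_t\partial_s p_{t,s}$, against which one pairs the factor $|x-y|^{1+2\kappa}$ extracted from $g(x+\xi)-g(y+\xi)$ via the Sobolev--Slobodeckij seminorm, invoking Corollary \ref{sobolev-2}. Applying \eqref{est-pab} then reduces everything to deterministic integrals of $|t-s|^{\kappa-2}s^{-1/2} + |t-s|^{\kappa-1}s^{-3/2}$ over the relevant time regions.

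**Handling the diagonal and the first subinterval.** The diagonal blocks — where $r,s$ fall in the same subinterval $[t_k,t_{k+1}]$ — should be controlled directly: there $G_r, G_s$ are increments over a short interval, and Lemma \ref{lem_1} (applied to $g$) gives $\E[G_s^2]\lesssim (s-\underline s)^\kappa \underline s^{-1/2}$, which after integrating over the diagonal blocks yields a bound of the form $\|\pi_n\|^{1+\kappa}\sum_k t_k^{-1/2}(t_{k+1}-t_k)$ plus a harmless first-block term. The very first subinterval $[0,t_1]$ must be treated separately in any case, since the $s^{-1/2}$ and $s^{-3/2}$ singularities at the origin are not integrable against the IBP estimate there; on $[0,t_1]$ one falls back on the crude bound $|G_s|\le 2\|g\|_\infty$, contributing $O(t_1^2) = O(\|\pi_n\|^2)$. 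I would also want to split off blocks adjacent to the time origin (say those with $t_k \lesssim \|\pi_n\|$) and absorb them the same crude way, so that in the genuinely off-diagonal regime one always has $s \gtrsim \|\pi_n\|$, making $|t-s|^{\kappa-1}s^{-3/2}$ summable.

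**Assembling the two discretizations.** Once the quadrature error is bounded by a constant times
\begin{align*}
\|\pi_n\|^{1+\kappa}\Big(1 + \sum_{k=1}^{n-1} t_k^{-1/2}(t_{k+1}-t_k)\Big) \;+\; \text{(lower-order, i.e. } O(\|\pi_n\|^2)\text{)},
\end{align*}
the two claims follow by plugging in the nets. For $\pi_n^{equi}$ one has $\|\pi_n^{equi}\| = T/n$ and, by Lemma \ref{sum-int} with $p=1/2$, the sum is $O(1)$; combined with the $O(n^{-2})$ remainder this gives $O(n^{-(1+\kappa)} + n^{-2})$, which is dominated by $n^{-(1+\kappa)} + n^{-3/2}$ as claimed (the $n^{-3/2}$ slack presumably comes from a coarser treatment of the near-origin blocks). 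For $\pi_n^*$ one has $\|\pi_n^*\| \le 2T/n$ by \eqref{Delta-est}, but now the relevant sum is not the one in Lemma \ref{sum-int}: because the early intervals are so short ($t_k^* \asymp T k^2/n^2$, so $t_k^* \asymp \|\pi_n^*\|$ only when $k\asymp\sqrt n$), one cannot discard the first $\asymp\sqrt n$ blocks for free, and the residual sum over them produces the extra $\log n$. The main obstacle will be this bookkeeping for $\pi_n^*$: one must carefully track, block by block, how the near-origin singularity interacts with the quadratically-graded mesh, and show the accumulated contribution is $O(\log(n)/n^{1+\kappa})$ rather than something worse; getting the exponents to line up in the IBP estimates over the first $\sqrt n$ blocks, where $t-s$, $s$, and $\|\pi_n^*\|$ are all comparable, is where the delicate estimation lies.
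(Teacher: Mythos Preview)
Your very first identity is false: the integrand in $\mathcal{W}^{(\pi_n)}_t$ is $\varphi'(W_s+\xi)\bigl(b(W_s+\xi)-b(W_{\underline s}+\xi)\bigr)$, not $G_s=(\varphi'b)(W_s+\xi)-(\varphi'b)(W_{\underline s}+\xi)$. The difference is $b(W_{\underline s}+\xi)\bigl(\varphi'(W_s+\xi)-\varphi'(W_{\underline s}+\xi)\bigr)$, which is precisely the paper's term $\mathcal{W}_2$; it requires its own argument (Lemma~\ref{step-decomp_2}, exploiting that $\varphi'$ is absolutely continuous with $|\varphi''|_\kappa<\infty$) and cannot be absorbed into the $G_s$ analysis. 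This is a repairable omission, but it is an omission.

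The more serious gap is your claimed output of the off-diagonal analysis. After applying \eqref{est-pab} you correctly arrive at the kernel $|u-v|^{\kappa-2}v^{-1/2}+|u-v|^{\kappa-1}v^{-3/2}$, but you then assert that the resulting double sum collapses to $\|\pi_n\|^{1+\kappa}\bigl(1+\sum_k t_k^{-1/2}(t_{k+1}-t_k)\bigr)$. It does not: the $v^{-3/2}$ part is not integrable at the origin, and no amount of ``discarding near-origin blocks'' at cost $O(\|\pi_n\|^2)$ will tame it. In the paper the double sum
\[
\sum_{k}\sum_{\ell<k-1}(t_{k+1}-t_k)(t_{\ell+1}-t_\ell)\int_{t_k}^{t_{k+1}}\!\!\int_{t_\ell}^{t_{\ell+1}}\bigl(|u-v|^{\kappa-2}v^{-1/2}+|u-v|^{\kappa-1}v^{-3/2}\bigr)\,dv\,du
\]
is handled via the integration-by-parts identity \eqref{part_int}, which trades the two singular pieces against one another and produces boundary terms. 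For $\pi_n^{equi}$ those boundary terms telescope in $\ell$ and leave a single contribution of size $t_1^{-1/2}\cdot n^{-2}\cdot T^\kappa\asymp n^{-3/2}$; this is the genuine origin of the $n^{-3/2}$ (dominant when $\kappa>1/2$), not ``slack from a coarser treatment''. For $\pi_n^*$ the same manipulation leaves a sum $\sum_\ell(t_{\ell+1}-t_\ell)(t_\ell^{-1/2}-t_{\ell+1}^{-1/2})\asymp n^{-1}\sum_\ell \ell^{-1}$, which is where the $\log n$ actually comes from---not from the first $\sqrt n$ blocks. Your proposed bound would give $O(n^{-(1+\kappa)})$ for \emph{both} meshes via Lemma~\ref{sum-int}, which is strictly better than the theorem for $\pi_n^{equi}$ with $\kappa>1/2$ and for $\pi_n^*$; so the argument as sketched cannot be correct.
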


\begin{proof} We will start  with an arbitrary discretization and specialize only at the end of the steps to $\pi_n^{equi}$ or $\pi_n^*$, if necessary.
For estimating $\mathcal{W}^{(\pi_n)}$ we use 
that 
\begin{align} \label{est-W2_prem}
\mathcal{W}^{(\pi_n)}_{t} \leq 2 \left(\mathcal{W}_1{(t)}+ \mathcal{W}_2{(t)} \right),
\end{align}
where
\begin{align*}
\mathcal{W}_1{(t)} &= \E\!\left[ \left| \int_0^{t}    \left[(\varphi'b)(W_s+\xi)- (\varphi'b)(W_{\es}+\xi) \right] ds \right|^2\right] , \\ 
\mathcal{W}_2{(t)} &=\E\!\left[
\left| \int_0^{t}     \left[ \varphi'(W_{s}+\xi)  -  \varphi'(W_\es+\xi) \right] b(W_{\es}+\xi) ds \right|^2\right].
\end{align*}

{\it Step 1.} Setting $\psi_3=\varphi'$ and $\psi_4=b$, noting that $\varphi''=-2b \varphi'$, and using Lemma  \ref{sobolev} we obtain that Lemma \ref{step-decomp_2} can be applied to estimate $\mathcal{W}_2$. Thus,  there exists a constant $ c^{(qs)}_{\varphi', b, \kappa, T} >0$ such that
\begin{align*}\sup_{t \in [0,T]}
\mathcal{W}_2{(t)} \leq  c^{(qs)}_{\varphi', b, \kappa, T}  \left( 1 + \sum_{k=1}^{n-1 }t_{k}^{-1/2}(t_{k+1} -t_k)  \right) \| \pi_n \|^{1+\kappa}.
\end{align*}
and using Lemma \ref{sum-int} it follows that for both $\pi_n^{equi}$ and $\pi_n^*$
\begin{align} \label{est-W2} \sup_{t \in [0,T]}
\mathcal{W}_2{(t)} \leq  c^{(qs)}_{\varphi', b, \kappa, T}  \left( 1 + 3 T^{1/2}  \right) \| \pi_n \|^{1+\kappa}.
\end{align}

{\it Step 2.}
For the remaining term, note that
$$ |\varphi'b(\cdot+\xi)|_{\kappa} = |\varphi'b|_{\kappa} < \infty $$
by Corollary \ref{sobolev-2} and
\begin{align}\label{est-W1-1}
\mathcal{W}_1{(t)} & \le 8\|\varphi'b\|_{\infty}^2 ( t_1 +( t-\et) \1_{[t_1,T]}(t))^2
\\&\quad+ 2  \cdot \1_{[t_1,T]}(t)\cdot\E\!\left[ \left| \int_{t_1}^{\et}    \left[(\varphi'b)(W_s+\xi)- (\varphi'b)(W_{\es}+\xi) \right] ds \right|^2 \right]
\\&\le 32\|\varphi'b\|_{\infty}^2 \| \pi_n \|^2 + 2\cdot \1_{[t_1,T]}(t)\cdot\ \E\!\left[ \left| \int_{t_1}^{\et}   \left[(\varphi'b)(W_s+\xi)- (\varphi'b)(W_{\es}+\xi) \right] ds \right|^2 \right].
\end{align}
In the following, let $\et=t_m$ for some $m \in \{2, \ldots, n\}$ and denote  
$$ I^{k,\ell}= \int_{t_k}^{t_{k+1}} \int_{t_\ell}^{t_{\ell+1}}  \left( (\varphi'b)(W_s+\xi)-(\varphi'b)(W_{t_k}+\xi) \right) \left((\varphi'b)(W_t+\xi)-(\varphi'b)(W_{t_{\ell}}+\xi) \right) dt ds . $$ 
We have that
\begin{equation}\label{est-W1-2}
\begin{split}
& \E\!\left[ \left| \int_{t_1}^{\et}    \left[(\varphi'b)(W_s+\xi)- (\varphi'b)(W_{\es}+\xi) \right] ds \right|^2 \right] 
=2 \sum_{k=2}^{{m}-1} \sum_{\ell=1}^{k-1}  \E[I^{k,\ell} ]+  \sum_{k=1}^{{m}-1}\E[I^{k,k} ]
. 
\end{split}
\end{equation}

{\it Step 3.} 
Using $2xy \leq x^2+y^2$ for $x,y\in\R$  we obtain that
\begin{align*}
\sum_{k=1}^{{m}-1}\E[I^{k,k} ]&=\sum_{k=1}^{{m}-1} 
\int_{t_k}^{t_{k+1}} \int_{t_k}^{t_{k+1}} \E \!\big[\left( (\varphi'b)(W_s+\xi)-(\varphi'b)(W_{t_k}+\xi) \right) 
)\\ & \qquad \qquad \qquad \qquad \qquad  \qquad \qquad  \times 
\left((\varphi'b)(W_t+\xi)-(\varphi'b)(W_{t_{k}}+\xi) \right) \big]dt ds  \\
&  \leq   \sum_{k=1}^{{m}-1} 
\int_{t_k}^{t_{k+1}} \int_{t_k}^{t_{k+1}}  \E \!\left[\ \left| (\varphi'b)(W_s+\xi)-(\varphi'b)(W_{t_k}+\xi) \right|^2\right] dt ds .
\end{align*}
For $s\ge t_k \ge t_1$, Lemma \ref{lem_1} shows that there exists a constant $c_\kappa>0$ such that 
\begin{align*}
\sum_{k=1}^{{m}-1}\E[I^{k,k} ]&\leq    
  \sum_{k=1}^{{m}-1}  \int_{t_k}^{t_{k+1}} \int_{t_k}^{t_{k+1}} c_{\kappa} |\varphi'b|_{\kappa}^2  (s-t_k)^{\kappa} t_{k}^{-1/2}dt ds \\
& \le
   c_{\kappa} |\varphi'b|_{\kappa}^2 \| \pi_n \|^{1+\kappa} \sum_{k=1}^{n-1}   t_{k}^{-1/2} (t_{k+1}-t_k)
   .
\end{align*}
Now, Lemma \ref{sum-int} gives
\begin{align}\label{Ikk}
\sum_{k=1}^{{m}-1}\E[I^{k,k} ]&\leq    
   3c_{\kappa}  T^{1/2} |\varphi'b|_{\kappa}^2 \| \pi_n \|^{1+\kappa} 
\end{align}for both discretizations.
It remains to take care of the off-diagonal terms with $k- \ell \geq 1$. 

\smallskip

{\it Step 4.}
Consider the case $m \geq 3$ and  $\ell=k-1 \neq  0$. Again using  $2xy \leq x^2+y^2$ for $x,y\in\R$, Lemma \ref{lem_1}, and Lemma \ref{sum-int} we get that
for both discretizations,
\begin{equation}\label{Ikk-1}
\begin{aligned}
2 \sum_{k=2}^{{m}-1}   \E[I^{k,k-1} ]
&= 2 \sum_{k=2}^{{m}-1} 
\int_{t_k}^{t_{k+1}} \int_{t_{k-1}}^{t_{k}} \E \!\big[ \left( (\varphi'b)(W_s+\xi)-(\varphi'b)(W_{t_k}+\xi) \right)
\\ & \quad \qquad \qquad \qquad \qquad  \qquad \qquad \times
 \left((\varphi'b)(W_t+\xi)-(\varphi'b)(W_{t_{k-1}}+ \xi) \right) \big]dt ds  
\\&  \leq    
 \sum_{k=2}^{{m}-1} 
\int_{t_k}^{t_{k+1}} \int_{t_{k-1}}^{t_{k}} \E \!\left[|(\varphi'b)(W_s+\xi)-(\varphi'b)(W_{t_k}+\xi) |^2  \right]dt ds  
\\&  \qquad +   
\sum_{k=2}^{{m}-1} 
\int_{t_k}^{t_{k+1}} \int_{t_{k-1}}^{t_{k}} \E \!\left[|(\varphi'b)(W_t+\xi)-(\varphi'b)(W_{t_{k-1}}+\xi) |^2  \right]dt ds  
\\&  \leq    
\sum_{k=2}^{{m}-1} 
\int_{t_k}^{t_{k+1}} \int_{t_{k-1}}^{t_{k}}     c_{\kappa} |\varphi'b|_{\kappa}^2  (s-t_k)^{\kappa} t_{k}^{-1/2}    dt ds  
\\ & \qquad + \sum_{k=2}^{{m}-1} 
\int_{t_k}^{t_{k+1}} \int_{t_{k-1}}^{t_{k}}     c_{\kappa} |\varphi'b|_{\kappa}^2  (t-t_{k-1})^{\kappa} t_{k-1}^{-1/2}    dt ds  
   \\&\le
  2 c_{\kappa} |\varphi'b|_{\kappa}^2 \| \pi_n \|^{1+\kappa} \sum_{k=1}^{n-1}   t_{k}^{-1/2} (t_{k+1}-t_k)
\leq 
6 c_{\kappa} T^{1/2} |\varphi'b|_{\kappa}^2 \| \pi_n\|^{1+\kappa}  .
\end{aligned}
\end{equation}
\smallskip

{\it Step 5.}
Consider the case {$m \geq 4$,} assume $k \geq \ell +2$, and use \eqref{pts}. We get
\begin{align}\label{Ilk-100}
\E[ I^{k,\ell} | \mathcal{F}_0 ]&=  \int_{\mathbb{R}} \int_{\mathbb{R}} (\varphi'b)(x+\xi)(\varphi'b)(y+\xi) 
\\&\qquad \,\,  \times 
\int_{t_k}^{t_{k+1}} \int_{t_\ell}^{t_{\ell+1}} \left( p_{s,t}(x,y)-p_{t_k,t}(x,y)-p_{s,t_{\ell}}(x,y)+p_{t_k,t_{\ell}}(x,y)\right)dt ds \, dx dy.
\end{align}
First note that
\begin{align}\label{dens}   p_{s,t}(x,y)-p_{t_k,t}(x,y)-p_{s,t_{\ell}}(x,y)+p_{t_k,t_{\ell}}(x,y) = \int_{t_k}^s \int_{t_{\ell}}^t \frac{\partial^2}{\partial u \partial v} p_{u,v}(x,y) \,  dv du.  \end{align}
Now observe that
\begin{align*}
&  \int_{\mathbb{R}} \int_{\mathbb{R}} (\varphi'b)(x+\xi)^2   \left( p_{s,t}(x,y)-p_{t_k,t}(x,y)-p_{s,t_{\ell}}(x,y)+p_{t_k,t_{\ell}}(x,y)\right)dx dy 
\\ & \quad =   \left(\E\!\left[|(\varphi' b)(W_s+\xi)|^2|\cF_0\right]-\E\!\left[|(\varphi' b)(W_{t_k}+\xi)|^2|\cF_0\right]\right)
\\&\quad \quad \quad \quad \quad \quad \quad \quad -
\left(\E\!\left[|(\varphi' b)(W_s+\xi)|^2|\cF_0\right]-\E\!\left[|(\varphi' b)(W_{t_k}+\xi)|^2|\cF_0\right]\right) =0
\end{align*}
and analogously
\begin{align*}
   \int_{\mathbb{R}} \int_{\mathbb{R}} (\varphi'b)(y+\xi)^2   \left( p_{s,t}(x,y)-p_{t_k,t}(x,y)-p_{s,t_{\ell}}(x,y)+p_{t_k,t_{\ell}}(x,y)\right)  dx dy  = 0.
\end{align*}
Combining this with \eqref{Ilk-100} and \eqref{dens} we obtain
\begin{align*}
  \E [I^{k,\ell} | \mathcal{F}_0] & =   -\frac{1}{2} \int_{\mathbb{R}} \int_{\mathbb{R}} |(\varphi'b)(x+\xi)-(\varphi'b)(y+\xi)|^2 \\ &  \qquad \qquad \qquad \times \int_{t_k}^{t_{k+1}} \int_{t_{\ell}}^{t_{\ell+1}} \int_{t_k}^s \int_{t_{\ell}}^t \frac{\partial^2}{\partial u \partial v} p_{u,v}(x,y) \,  dv du \,  dt ds\,  dx dy
\\ & = -\frac{1}{2} \int_{\mathbb{R}} \int_{\mathbb{R}} \frac{|(\varphi'b)(x+\xi)-(\varphi'b)(y+\xi)|^2}{|x-y|^{1+2\kappa}} 
\\&\quad \quad \quad \quad \quad \quad \times
\int_{t_k}^{t_{k+1}} \int_{t_{\ell}}^{t_{\ell+1}} \int_{t_k}^s \int_{t_{\ell}}^t |x-y|^{1+2\kappa} \frac{\partial^2}{\partial u \partial v} p_{u,v}(x,y) \,  dv du \,  dt ds\,  dx dy
.
\end{align*}
Corollary \ref{sobolev-2} and Lemma  \ref{lem_pab} ensure that there exists a constant $\densityconst>0$ such that
\begin{align*}
  \E [I^{k,\ell} | \mathcal{F}_0] &  \leq  \frac{\densityconst}{2} |\varphi' b|_\kappa^2 \int_{t_k}^{t_{k+1}} \int_{t_{\ell}}^{t_{\ell+1}} \int_{t_k}^s \int_{t_{\ell}}^t
 \left( |u-v|^{ \kappa -2}v^{-1/2} +  |u-v|^{\kappa -1} v^{-3/2} \right)
 dv du  dt ds
 \\&\le
  \frac{\densityconst}{2} |\varphi' b|_\kappa^2 (t_{k+1}-t_k)(t_{\ell+1}-t_\ell)\int_{t_k}^{t_{k+1}} \int_{t_{\ell}}^{t_{\ell+1}}
 \left( |u-v|^{ \kappa -2}v^{-1/2} +  |u-v|^{\kappa -1} v^{-3/2} \right)
 dv du.
\end{align*}
Hence,
\begin{equation}\label{Ilk-1}
\begin{aligned}
 &2 \sum_{k=3}^{{m}-1} \sum_{\ell=1}^{k-2}  \E[I^{k,\ell} ]
 =
  2 \sum_{k=3}^{{m}-1} \sum_{\ell=1}^{k-2} \E[ \E[I^{k,\ell} |\cF_0] ]
  \\&\qquad\le \densityconst |\varphi' b|_\kappa^2  \sum_{k=3}^{n-1} \sum_{\ell=1}^{k-2} (t_{k+1}-t_k)(t_{\ell+1}-t_\ell) \\ & \qquad \qquad \qquad \qquad \qquad \qquad \times \int_{t_k}^{t_{k+1}} \int_{t_{\ell}}^{t_{\ell+1}}
 \left( |u-v|^{ \kappa -2}v^{-1/2} +  |u-v|^{\kappa -1} v^{-3/2} \right)
 dv du.
\end{aligned}
\end{equation}
Summarizing the above estimates \eqref{est-W2_prem}, \eqref{est-W2}, \eqref{est-W1-1}, \eqref{est-W1-2}, \eqref{Ikk}, \eqref{Ikk-1}, and \eqref{Ilk-1}, establishes for all $t\in[0,T]$ that
\begin{equation} \label{sum_est_before_disc}
\begin{aligned}
\mathcal{W}_t^{(\pi_n)} & \leq  \left(2( 1+3T^{1/2} )c^{(qs)}_{\varphi', b, \kappa, T}+64 \|\varphi' b\|^2_\infty {\|\pi_n\|^{1-\kappa}}+ 36 c_\kappa T^{1/2}|\varphi'b|_\kappa^2\right) \| \pi_n \|^{1+\kappa} \\ & \quad  + 4\densityconst |\varphi'b|_\kappa^2 \sum_{k=3}^{n-1} \sum_{\ell=1}^{k-2} (t_{k+1}-t_k)(t_{\ell+1}-t_\ell)  
\\&\qquad \quad \times
\int_{t_k}^{t_{k+1}} \int_{t_{\ell}}^{t_{\ell+1}}
 \left( |u-v|^{ \kappa -2}v^{-1/2} +  |u-v|^{\kappa -1} v^{-3/2} \right) dvdu.
\end{aligned}
\end{equation}

\smallskip

{\it Step 6, Case 1.}
First consider the non-equidistant discretization \eqref{net-ga}.
Observe that
\begin{align} \label{part_int}
 2 (1-\kappa)\int_{t_{\ell}}^{t_{\ell+1}}
 |u-v|^{\kappa -2} v^{-1/2} \, dv =
  \int_{t_{\ell}}^{t_{\ell+1}}
 |u-v|^{\kappa -1} v^{-3/2} 
 \,  dv + 2|u-v|^{\kappa-1} v^{-1/2} \big{|}_{v=t_{\ell}}^{v=t_{\ell+1}}
\end{align} for $u \geq t_{\ell+1}$.
Thus we have
\begin{equation} \label{split_*}
\begin{aligned}
& \sum_{k=3}^{n-1} \sum_{\ell=1}^{k-2} (t_{k+1}-t_k)(t_{\ell+1}-t_\ell)  \int_{t_k}^{t_{k+1}} \int_{t_{\ell}}^{t_{\ell+1}}
 \left( |u-v|^{ \kappa -2}v^{-1/2} +  |u-v|^{\kappa -1} v^{-3/2} \right) dvdu \\& \quad = \left( 1+\frac{1}{2(1-\kappa)} \right) \mathcal{I}_n^{*,(1)} + \frac{1}{1-\kappa} \mathcal{I}_n^{*,(2)}
\end{aligned}
\end{equation}
with
\begin{align*}
\mathcal{I}_n^{*,(1)}& = \sum_{k=3}^{n-1} \sum_{\ell=1}^{k-2} (t_{k+1}-t_k)(t_{\ell+1}-t_\ell)   \int_{t_k}^{t_{k+1}} \int_{t_{\ell}}^{t_{\ell+1}} |u-v|^{\kappa -1} v^{-3/2} dvdu,   \\  \mathcal{I}_n^{*,(2)}&= \sum_{k=3}^{n-1} \sum_{\ell=1}^{k-2} (t_{k+1}-t_k)(t_{\ell+1}-t_\ell)
 \int_{t_k}^{t_{k+1}}\left( |u-t_{\ell+1}|^{\kappa-1} t_{\ell+1}^{-1/2}- |u-t_{\ell}|^{\kappa-1} t_{\ell}^{-1/2} \right) \, du . 
\end{align*}

Since $k\ge \ell+2$, $\kappa\in(0,1)$ and $x^{\kappa}-y^{\kappa} \leq |x-y|^{\kappa}$ for $x>y\geq 0$ we have
\begin{align*}
&  \int_{t_k}^{t_{k+1}} \int_{t_{\ell}}^{t_{\ell+1}}  
  |u-v|^{\kappa -1} v^{-3/2} 
 \,  dv du \\ & \qquad \leq   \int_{t_k}^{t_{k+1}} \int_{t_{\ell}}^{t_{\ell+1}}  
  |u-t_{\ell+1}|^{\kappa -1} v^{-3/2} \,dv  du \\ &  \qquad = 2 \left( t_{\ell}^{-1/2} -t_{\ell+1}^{-1/2} \right) \int_{t_k}^{t_{k+1}} 
  |u-t_{\ell+1}|^{\kappa -1}du  \\
&  \qquad =
\frac{2}{\kappa} \left( t_{\ell}^{-1/2} -t_{\ell+1}^{-1/2} \right) \left( |t_{k+1}-t_{\ell+1}|^{\kappa }-
  |t_k-t_{\ell+1}|^{\kappa }\right) 
     \\& \qquad \le
\frac{2}{\kappa} \left( t_{\ell}^{-1/2} -t_{\ell+1}^{-1/2} \right)  |t_{k+1}-t_k|^{\kappa}
  .  
\end{align*}
 Thus it follows
\begin{align*} 
&  \sum_{k=3}^{n-1} \sum_{\ell=1}^{k-2} (t_{k+1}-t_k) (t_{\ell+1}-t_{\ell}) \int_{t_k}^{t_{k+1}} \int_{t_{\ell}}^{t_{\ell+1}}  
|u-v|^{\kappa -1} v^{-3/2}  
 \,  dv du  \nonumber 
\\ & \le \frac{2}{\kappa} \| \pi_n^*\|^{\kappa}
 \sum_{k=3}^{n-1} \sum_{\ell=1}^{k-2}   (t_{k+1}-t_k) (t_{\ell+1}-t_{\ell})   \left( t_{\ell}^{-1/2} -t_{\ell+1}^{-1/2} \right) 
\\ & \le \frac{2T}{\kappa} \| \pi_n^*\|^{\kappa} \sum_{\ell=1}^{n-3} (t_{\ell+1}-t_{\ell})   \left( t_{\ell}^{-1/2} -t_{\ell+1}^{-1/2} \right) 
\\ &  =  \frac{2T^{3/2}}{\kappa} \| \pi_n^*\|^{\kappa}
 \sum_{\ell=1}^{n-3}   \frac{2\ell+1}{n^2} \left( \frac{n}{\ell} - \frac{n}{\ell+1} \right)
\\ &  =  \frac{2T^{3/2}}{\kappa} \| \pi_n^*\|^{\kappa} \frac{1}{n} \sum_{\ell=1}^{n-3}   \frac{2\ell +1}{\ell(\ell+1)}
\\ &  \leq  \frac{2^{2+\kappa}T^{3/2+\kappa}}{\kappa}  \frac{1}{n^{1+ \kappa}} \sum_{\ell=1}^{n-3}   \frac{1}{\ell},
 \end{align*} 
where we have used \eqref{net-ga}, \eqref{Delta-est}, and that $t_{\ell+1}-t_{\ell}=T(2\ell +1)n^{-2}$.
Since 
$$  \sum_{\ell=1}^{n}   \frac{1}{\ell} \leq 1+ \log(n), $$ we have
\begin{align}  \label{Ilk-2}
   \mathcal{I}_n^{*,(1)}
 \leq      \frac{2^{2+\kappa}T^{3/2+\kappa}}{\kappa}  \frac{1+\log(n)}{n^{1+ \kappa}}.
\end{align}
So, the remaining term to estimate is 
\begin{align*}
\mathcal{I}_n^{*,(2)}& =   \sum_{k=3}^{n-1} \sum_{\ell=1}^{k-2}  (t_{k+1}-t_k) (t_{\ell+1}-t_{\ell}) \int_{t_k}^{t_{k+1}}\left( |u-t_{\ell+1}|^{\kappa-1} t_{\ell+1}^{-1/2}- |u-t_{\ell}|^{\kappa-1} t_{\ell}^{-1/2} \right) \, du . 
\end{align*}
We get
\begin{align*}
\mathcal{I}_n^{*,(2)} & =
   \sum_{\ell=1}^{n-3} \sum_{k=\ell+2}^{n-1}  (t_{k+1}-t_k) (t_{\ell+1}-t_{\ell}) \int_{t_k}^{t_{k+1}}\left( |u-t_{\ell+1}|^{\kappa-1} t_{\ell+1}^{-1/2}- |u-t_{\ell}|^{\kappa-1} t_{\ell}^{-1/2} \right) \, du
    \\ & \leq 
   \| \pi_n^*\| \sum_{\ell=1}^{n-3}  (t_{\ell+1}-t_{\ell}) \int_{t_{\ell+2}}^{T}\left( |u-t_{\ell+1}|^{\kappa-1} t_{\ell+1}^{-1/2}- |u-t_{\ell}|^{\kappa-1} t_{\ell}^{-1/2} \right) \, du
\\ & =
\frac{   \| \pi_n^*\| }{\kappa} 
   \sum_{\ell=1}^{n-3}  (t_{\ell+1}-t_{\ell}) \left[\left( |T-t_{\ell+1}|^{\kappa} -|t_{\ell+2}-t_{\ell+1}|^{\kappa} \right) t_{\ell+1}^{-1/2} - \left( |T-t_{\ell}|^{\kappa} -|t_{\ell+2}-t_{\ell}|^{\kappa} \right) t_{\ell}^{-1/2}\right].
\end{align*}
Using \eqref{net-ga}, \eqref{Delta-est}, and estimating negative terms from above by zero we obtain that
\begin{align*}
\mathcal{I}_n^{*,(2)}& \leq
\frac{   \| \pi_n^*\| }{\kappa} 
   \sum_{\ell=1}^{n-3} (t_{\ell+1}-t_{\ell}) \left[\left( |T-t_{\ell+1}|^{\kappa} - |T-t_{\ell}|^{\kappa} \right) t_{\ell+1}^{-1/2}  + |t_{\ell+2}-t_{\ell}|^{\kappa}  t_{\ell}^{-1/2}\right] 
   \\ &  \leq  \frac{   \| \pi_n^*\| }{\kappa} 
   \sum_{\ell=1}^{n-3}   (t_{\ell+1}-t_{\ell})  |t_{\ell+2}-t_{\ell}|^{\kappa}  t_{\ell}^{-1/2} 
  \\ &  \leq  \frac{   2^{\kappa}\| \pi_n^*\|^{1+\kappa} }{\kappa} 
   \sum_{\ell=1}^{n-3}    (t_{\ell+1}-t_{\ell})  t_{\ell}^{-1/2} 
 \\ &  \leq  \frac{   2^{1+2\kappa} T^{1+\kappa} }{\kappa}  \frac{1}{n^{1+\kappa}}
   \sum_{\ell=1}^{n-3}   (t_{\ell+1}-t_{\ell})    t_{\ell}^{-1/2} 
.
\end{align*}
Finally, Lemma \ref{sum-int} establishes 
\begin{align}\label{lastcase1} \mathcal{I}_n^{*,(2)} \leq  \frac{   2^{1+2\kappa}3 T^{3/2+\kappa} }{\kappa}  \frac{1}{n^{1+\kappa}}.
\end{align}
Combining \eqref{sum_est_before_disc} with  \eqref{split_*}, \eqref{Ilk-2}, and \eqref{lastcase1} finishes the analysis of $\mathcal{W}_t^{(\pi_n^*)}$ .\\

\medskip

{\it Step 6, Case 2.}
Now consider the equidistant discretization \eqref{equidisc}.
We make use of \eqref{part_int} in a different way than above. It holds that
\begin{equation} \label{split_equi}
\begin{aligned}
& \sum_{k=3}^{n-1} \sum_{\ell=1}^{k-2} (t_{k+1}-t_k)(t_{\ell+1}-t_\ell)  \int_{t_k}^{t_{k+1}} \int_{t_{\ell}}^{t_{\ell+1}}
 \left( |u-v|^{ \kappa -2}v^{-1/2} +  |u-v|^{\kappa -1} v^{-3/2} \right) dvdu \\& \quad = \left( 3-2\kappa \right) \mathcal{I}_n^{equi,(1)} -2 \mathcal{I}_n^{equi,(2)}
\end{aligned}
\end{equation}
with
\begin{align*}
\mathcal{I}_n^{equi,(1)}& = \frac{T^2}{n^2} \sum_{k=3}^{n-1} \sum_{\ell=1}^{k-2}   \int_{t_k}^{t_{k+1}} \int_{t_{\ell}}^{t_{\ell+1}} |u-v|^{\kappa -2} v^{-1/2} dvdu,   \\  \mathcal{I}_n^{equi,(2)}&= \frac{T^2}{n^2}   \sum_{k=3}^{n-1} \sum_{\ell=1}^{k-2} 
 \int_{t_k}^{t_{k+1}}\left( |u-t_{\ell+1}|^{\kappa-1} t_{\ell+1}^{-1/2}- |u-t_{\ell}|^{\kappa-1} t_{\ell}^{-1/2} \right) \, du . 
\end{align*}
Exploiting the telescoping sum in the second term, we get
\begin{align*}
\mathcal{I}_n^{equi,(2)} & = \frac{T^2}{n^2}
   \sum_{k=3}^{n-1} \int_{t_k}^{t_{k+1}}\left( |u-t_{k-1}|^{\kappa-1} t_{k-1}^{-1/2}- |u-t_{1}|^{\kappa-1} t_{1}^{-1/2} \right) \, du
    \\ & \geq -  \frac{T^2}{n^2}
  \int_{t_1}^{T} |u-t_{1}|^{\kappa-1} t_{1}^{-1/2}  \, du
\\ & \geq -  \frac{T^{3/2 + \kappa}}{\kappa} \frac{1}{n^{3/2}},
\end{align*}
since $t_1=T/n$.
It follows that
\begin{align}\label{est-equi-2}
 -2 \mathcal{I}_n^{equi,(2)} \leq \frac{2T^{3/2 + \kappa}}{\kappa} \frac{1}{n^{3/2}}.
\end{align}
Moreover, we have
\begin{align*}
\int_{t_k}^{t_{k+1}} \int_{t_{\ell}}^{t_{\ell+1}} |u-v|^{\kappa -2} v^{-1/2} dvdu  & \leq \frac{T}{n}  \int_{t_{\ell}}^{t_{\ell+1}} |t_k-v|^{\kappa -2} v^{-1/2} dv  \\ &\leq 
 \frac{T}{n}|t_k-t_{\ell+1}|^{\kappa -2}  \int_{t_{\ell}}^{t_{\ell+1}} v^{-1/2} dv
 \\ &= \frac{T^{\kappa-1}}{n^{\kappa-1}} |k-\ell-1|^{\kappa-2}   \int_{t_{\ell}}^{t_{\ell+1}} v^{-1/2} dv .
\end{align*}
Thus, we end up with
\begin{equation}\label{lastcase2}
\begin{aligned}
\mathcal{I}_n^{equi,(1)}&  \leq \frac{T^{1+\kappa}}{n^{1+\kappa}} \sum_{k=3}^{n-1} \sum_{\ell=1}^{k-2}  |k-\ell-1|^{\kappa-2}   \int_{t_{\ell}}^{t_{\ell+1}} v^{-1/2} dv
\\ & = \frac{T^{1+\kappa}}{n^{1+\kappa}} \sum_{\ell=1}^{n-3} \int_{t_{\ell}}^{t_{\ell+1}} v^{-1/2} dv  \sum_{k=\ell+2}^{n-1}  |k-\ell-1|^{\kappa-2}  
\\ &  \leq \frac{T^{1+\kappa}}{n^{1+\kappa}}  \int_{0}^{T} v^{-1/2} dv \, \sum_{j=1}^{n-3}   j^{\kappa-2}  
\\ & \leq  \frac{2T^{3/2+\kappa}}{n^{1+\kappa}} \sum_{j=1}^{n}   j^{\kappa-2},
\end{aligned}
\end{equation}
where  $\kappa \in (0,1)$ implies 
$  \sum_{j=1}^{\infty}   j^{\kappa-2}< \infty$.

Combining  \eqref{sum_est_before_disc} with \eqref{split_equi}, \eqref{est-equi-2}, and \eqref{lastcase2} finishes the analysis of $\mathcal{W}_t^{(\pi_n^{equi})}$ and the proof of this theorem.
\end{proof}

\begin{remark} If the initial condition $\xi$  has additionally a bounded Lebesgue density,  then  
 \citet[Theorem 8]{altmeyer2017} yields for the term $\mathcal{W}_1(T) $ in the proof  of Theorem \ref{main_2}
the convergence order $(1+\kappa)/2$ also for equidistant discretizations,  i.e.~there is no cut-off of the convergence order for $\kappa \in [1/2,1)$. Due to the independence of $\xi$ and $W$, the assumption of a bounded Lebesgue density $ \varsigma$ for $\mathbb{P}^{\xi}$  leads to a smoothing effect in the integration problem;  roughly spoken, $(\varphi'b)(\cdot+\xi)$ can be replaced by the convolution $ \int_{\mathbb{R}} (\varphi'b)(\cdot + z) \varsigma(z) dz$.
\end{remark}

We finally obtain the following statement for the convergence rate of the EM schemes $x^{(\pi_n^{equi})}$ and $x^{(\pi_n^{*})}$.

\begin{corollary}\label{cor_em}
Let Assumptions \ref{ass} and \ref{ass_b}   hold.  Then, for all $\epsilon\in(0,1)$ there exist  constants $C^{(EM),equi}_{\epsilon, \mu,T,\kappa}>0$ and  $C^{(EM),*}_{\epsilon, \mu,T,\kappa}>0$ such that 
$$ \sup_{t \in [0,T]} \E\!\left[ \left|X_t-x_t^{(\pi_n^{equi})} \right|^2\right] \leq  C^{(EM),equi}_{\epsilon, \mu,T,\kappa} \cdot \left(   \frac{1}{n^{1+\kappa-\epsilon}} + \frac{1}{n^{3/2-\epsilon}} \right)$$
and
$$  \sup_{t \in [0,T]}   \E\!\left[ \left|X_t-x_t^{(\pi_n^{*})}\right|^2\right] \leq  C^{(EM),*}_{\epsilon, \mu,T,\kappa} \cdot  \frac{1}{n^{1+\kappa-\epsilon}}.$$
\end{corollary}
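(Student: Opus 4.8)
The plan is to derive the corollary directly from the reduction Theorem~\ref{main_1} together with the quadrature bounds of Theorem~\ref{main_2}; the only real work is bookkeeping the exponents through the map $x\mapsto x^{1-\varepsilon}$ and, in the non-equidistant case, absorbing a logarithm. First I would fix $\epsilon\in(0,1)$ and invoke Theorem~\ref{main_1} with an auxiliary parameter $\varepsilon=\varepsilon(\epsilon)\in(0,1)$ to be specified, obtaining
$$\sup_{t\in[0,T]}\E\!\left[|X_t-x_t^{(\pi_n)}|^2\right]\le C^{(R)}_{\varepsilon,a,b,T}\Bigl(\|\pi_n\|^2+\sup_{t\in[0,T]}|\mathcal{W}^{(\pi_n)}_t|^{1-\varepsilon}\Bigr),$$
valid for either discretization. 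Once $\varepsilon$ is chosen as a function of $\epsilon$ and $\kappa$, the prefactor $C^{(R)}_{\varepsilon,a,b,T}$ together with the constants produced by Theorem~\ref{main_2} depend only on $\mu,T,\kappa,\epsilon$ and will be absorbed into $C^{(EM),equi}_{\epsilon,\mu,T,\kappa}$ resp.\ $C^{(EM),*}_{\epsilon,\mu,T,\kappa}$.

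For $\pi_n^{equi}$ one has $\|\pi_n^{equi}\|^2=T^2n^{-2}$, and Theorem~\ref{main_2} gives $\sup_t\mathcal{W}^{(\pi_n^{equi})}_t\le C^{(Q),equi}_{b,T,\kappa}\bigl(n^{-(1+\kappa)}+n^{-3/2}\bigr)$. Using subadditivity of $x\mapsto x^{1-\varepsilon}$ on $[0,\infty)$, this yields $\sup_t|\mathcal{W}^{(\pi_n^{equi})}_t|^{1-\varepsilon}\le C\bigl(n^{-(1+\kappa)(1-\varepsilon)}+n^{-(3/2)(1-\varepsilon)}\bigr)$. Choosing, say, $\varepsilon=\epsilon/2$ makes the two exponents at least $1+\kappa-\epsilon$ and $3/2-\epsilon$ (here one uses $1+\kappa<2$ so that $(1+\kappa)\varepsilon\le\epsilon$ and $(3/2)\varepsilon\le\epsilon$); since moreover $1+\kappa-\epsilon<2$, the term $\|\pi_n^{equi}\|^2=T^2n^{-2}$ is dominated by $n^{-(1+\kappa-\epsilon)}$ for all $n\ge1$. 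Collecting terms and merging constants gives the first asserted bound.

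For $\pi_n^{*}$ one has $\|\pi_n^{*}\|^2\le 4T^2n^{-2}$ by \eqref{Delta-est}, and Theorem~\ref{main_2} gives $\sup_t\mathcal{W}^{(\pi_n^{*})}_t\le C^{(Q),*}_{b,T,\kappa}(1+\log n)\,n^{-(1+\kappa)}$, hence $\sup_t|\mathcal{W}^{(\pi_n^{*})}_t|^{1-\varepsilon}\le C(1+\log n)^{1-\varepsilon}n^{-(1+\kappa)(1-\varepsilon)}\le C(1+\log n)\,n^{-(1+\kappa)(1-\varepsilon)}$. The only additional point is the logarithm: for every $\delta>0$ there is $c_\delta$ with $1+\log n\le c_\delta n^{\delta}$ for all $n\ge1$, so the bound becomes $c_\delta C\,n^{-[(1+\kappa)(1-\varepsilon)-\delta]}$. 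Taking $\varepsilon=\delta=\epsilon/3$ gives $(1+\kappa)\varepsilon+\delta=(2+\kappa)\epsilon/3<\epsilon$, so the exponent is at least $1+\kappa-\epsilon$, and again $\|\pi_n^{*}\|^2=O(n^{-2})$ is dominated. This yields the second bound. I do not expect a genuine obstacle here: the only care needed is the exponent bookkeeping under $x\mapsto x^{1-\varepsilon}$ and the sub-polynomial absorption of $\log n$, after which both estimates follow by merging constants.
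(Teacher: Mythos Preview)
Your proposal is correct and follows the same approach as the paper: combine Theorem~\ref{main_1} with Theorem~\ref{main_2}, bound $\|\pi_n\|$ via $\|\pi_n^{equi}\|=T/n$ and \eqref{Delta-est}, and track exponents through $x\mapsto x^{1-\varepsilon}$. The paper's own proof is terser---it does not spell out the choice of $\varepsilon$ in terms of $\epsilon$ or the absorption of the logarithm---but your explicit bookkeeping (e.g.\ $\varepsilon=\epsilon/2$ in the equidistant case, $\varepsilon=\delta=\epsilon/3$ in the non-equidistant case) is exactly what is implicitly needed and is carried out correctly.
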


\begin{proof}
Theorems \ref{main_1} and \ref{main_2} yield that there exist constants $C^{(R)}_{\varepsilon,a,b,T},C^{(Q),*}_{b,T,\kappa}>0$ such that
\begin{align*}
\sup_{t \in [0,T]}   \E\!\left[ \left|X_t-x_t^{(\pi_n^{*})}\right|^2\right] &\leq  C^{(R)}_{\varepsilon, a,b,T}   \left[\|\pi_n^*\|^2 +\left(C^{(Q),*}_{b,T,\kappa}   \frac{1+\log(n)}{n^{1+\kappa}}\right)^{1-\varepsilon}  \right]
\\&\le
  C^{(R)}_{\varepsilon, a,b,T}   \left[\frac{4T^2}{n^2} +\left(C^{(Q),*}_{b,T,\kappa} \frac{1+\log(n)}{n^{1+\kappa}}\right)^{1-\varepsilon}  \right]
,
\end{align*}
where we used \eqref{Delta-est}.
The estimate for the equidistant discretization is obtained analogously.
\end{proof}

\section*{Acknowledgements}

The authors are very thankful to the referees for their insightful comments and remarks.

M.~Sz\"olgyenyi has been supported by the AXA Research Fund grant `Numerical Methods for Stochastic Differential Equations with Irregular Coefficients with Applications in Risk Theory and Mathematical Finance'.
A part of this article was written while M.~Sz\"olgyenyi was affiliated with the Seminar for Applied Mathematics and the RiskLab Switzerland, ETH Zurich, R\"amistrasse 101, 8092 Zurich, Switzerland.

\end{document}